\documentclass [12pt]{amsart}

\pagestyle{plain}
\usepackage{graphicx}
\usepackage{amsmath}
\usepackage{amssymb}

\topmargin -.5in
\textheight 9.25in
\oddsidemargin -0.25in
\evensidemargin -0.25in
\textwidth 6.5in

\sloppy
\flushbottom
\parindent 1em
\leftmargini 2em
\leftmarginv .5em
\leftmarginvi .5em

\marginparwidth 48pt 
\marginparsep 10pt 
\columnsep 10mm

\newtheorem{thm}{Theorem}
\newtheorem{Theorem}[thm]{Theorem}

\newtheorem{lemma}{Lemma}

\newtheorem{prop}{Proposition}

\newtheorem{rmk1}{Remark}

\newcommand{\M}{\mathcal{M}}
\newcommand{\Pf}{P_{1/2s}(\chi)}
\newcommand{\Mf}{M_{1/s}(\chi)}

\newcommand{\hf}{\tfrac{1}{2}}

\newcommand{\w}{\vec{w}}
\newcommand{\wone}{\vec{w_1}}
\newcommand{\wtwo}{\vec{w_2}}
\newcommand{\z}{\vec{z}}
\newcommand{\zone}{\vec{z_1}}
\newcommand{\ztwo}{\vec{z_2}}
\newcommand{\af}{\mathfrak{a}}
\newcommand{\Wf}{\mathcal{W}}

\newcommand{\bfrac}[2]{\left(\frac{#1}{#2}\right)}

\begin{document}

\title{Lower bounds for small fractional moments of Dirichlet $L$-functions}

\author{Vorrapan Chandee}
\email{vorrapan@buu.ac.th}
\address{Department of Mathematics, Burapha University, Chonburi, Thailand 20131; Centre de recherches math\'{e}matiques Universit\'{e} de Montr\'{e}al
P.O. Box 6128, Centre-ville Station Montr\'{e}al, Qu\'{e}bec
H3C 3J7}

\author{Xiannan Li}
\email{xiannan@illinois.edu}
\address{Department of Mathematics, University of Illinois at Urbana-Champaign, 1409 W. Green Street, Urbana, IL 61801 USA}

\subjclass[2000]{Primary 11N60; Secondary 11R42.}

\date{\today}
\thanks
{The second author is partially supported by a NSERC PGS-D award.}
\maketitle
\begin{abstract}
We prove a lower bound of the correct order of magnitude in the conductor aspect for small rational moments of Drichlet $L$-functions.  Such bounds require new techniques, which is visible from the relationship to non-vanishing results for $L(1/2, \chi)$.
\end{abstract}

\section{Introduction}
Moments of $L$-functions on the critical line have been objects of intense scrutiny from analytic number theorists.  These moments are related to the size and value distribution of $L$-functions, and have applications towards both zero density and non-vanishing results.  The first moments studied were those of the Riemann zeta function, which are averages of the form
$$I_k(T) := \int_0^T |\zeta(\frac{1}{2} + it)|^{2k}dt.
$$  Here, asymptotic formulae were proven when $k = 1$ by Hardy and Littlewood and when $k=2$ by Ingham (see \cite{Ti} VII), and a well known folklore conjecture states that  $I_k(T) \sim c_k T(\log T)^{k^2}$ for constants $c_k$ depending on $k$.  This is supported by the random matrix model of Keathing and Snaith \cite{KS} from which explicit conjectures for $c_k$ were formulated.  In support of this conjecture, good upper bounds are available conditional on RH, due to Soundararajan, of the form $I_k(T) \ll T(\log T)^{k^2+\epsilon}$ \cite{Moment}.

These moments have also been studied when $k$ is not an integer.  Here, results are due to Conrey and Ghosh\cite{CG}, Heath-Brown \cite{HB1}, and Ramachandra \cite{Rama2}, \cite{Rama3}.  In particular, the results of Ramachandra state on RH that $I_k(T) \ll T(\log T)^{k^2}$ for $0\leq k\leq 2$ and $I_k(T) \gg T(\log T)^{k^2}$ for all real $k\geq 0$.  Heath-Brown \cite{HB1} proved the lower bound unconditionally for rational $k$ and the upper bound unconditionally for $k=1/n$ where $n \in \mathbb{N}$, using a convexity argument based on a result of Gaberiel \cite{Ga}.

Recently, Radziwill showed that $I_k(T)  \ll T(\log T)^{k^2}$ for all $k<2.181$, also conditionally on RH.  His work connects this question to the result of Hughes and Young \cite{HY} on the twisted fourth moments of $\zeta(s)$.  In a preprint, Radziwill and Soundararajan \cite{MS} prove unconditionally the lower bound of $I_k(T) \gg T(\log T)^{k^2}$ for all $k\geq 1$, and plan to do so for $k<1$.

Authors have also investigated analogous moments for families of $L$-functions in other aspects, and the simpliest example here are the Dirichlet $L$-functions.  To be more precise, we are interested in averages of the form 
$$\M_k(q) = \sum_{\chi\neq \chi_0} |L(1/2, \chi)|^{2k}
$$for real $k\geq 0$, where the sum is over all non-principal Dirichlet characters $\chi$ with modulus $q$.

In this direction, the method of Rudnick and Soundararajan \cite{RS} provides good lower bounds of the correct order of magnitude $\M_k(q) \gg \phi(q) \log^{k^2}q$ when $k\geq 1$ is rational and $q$ is prime.  Very recently, Heath-Brown \cite{D.R.HB} has shown that $\M_k(q) \ll \phi(q) \log^{k^2}q$ conditionally on GRH when $k\in (0, 2)$, and unconditionally when $k=1/v$ for $v$ a positive integer.  In the same paper \cite{D.R.HB}, Heath-Brown notes that the method of Rudnick and Soundararajan does not apply to find good lower bounds when $0<k< 1$ and that it would be interesting to derive such bounds.  The purpose of this paper is to fill this gap in the literature.  More precisely, we prove the following.

\begin{Theorem}  \label{thm:lowerfractional}
For $\M_k(q) = \sum_{\chi\neq \chi_0} |L(1/2, \chi)|^{2k}$, we have that
$$\M_k(q) \gg_k \phi(q) \log^{k^2}q
$$ for all rational $k \in (0, 1)$ and prime $q$.
\end{Theorem}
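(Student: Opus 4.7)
The plan is to adapt the Rudnick--Soundararajan mollifier method, which gives the matching lower bound $\M_k(q) \gg \phi(q)(\log q)^{k^2}$ for rational $k \geq 1$, to the range $0 < k < 1$. Write $k = a/b$ with $a, b$ coprime positive integers and $a < b$. The fundamental obstruction is that the Rudnick--Soundararajan argument proceeds via H\"older's inequality with conjugate exponents $2k$ and $2k/(2k-1)$, and the second becomes non-positive as soon as $k \leq \hf$; a different inequality is required.

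The starting point is a Dirichlet polynomial mollifier $M(\chi) = \sum_{n \leq y} \lambda(n) \chi(n) n^{-1/2}$ of length $y = q^\theta$ for a suitable $\theta > 0$, where $\lambda(n)$ is a smoothly truncated version of the arithmetic function $d_{k}(n)$ defined by $\zeta(s)^{k} = \sum_n d_k(n) n^{-s}$, so that $M$ heuristically behaves as a Dirichlet polynomial approximation to $L(\hf,\chi)^{k}$. I then apply a three-term H\"older inequality of the shape
\begin{equation*}
\biggl|\sum_{\chi \ne \chi_0} L(\hf,\chi)\,\overline{M(\chi)}\biggr|
\;\leq\; \Bigl(\sum_\chi |L(\hf,\chi)|^{2k}\Bigr)^{1/p_1}
\Bigl(\sum_\chi |L(\hf,\chi)|^{2} |M(\chi)|^{2(b-a)}\Bigr)^{1/p_2}
\Bigl(\sum_\chi |M(\chi)|^{2b}\Bigr)^{1/p_3},
\end{equation*}
with $p_1, p_2, p_3 > 0$ chosen so that $1/p_1 + 1/p_2 + 1/p_3 = 1$. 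The positive integers $b-a$ and $b$ are engineered so that the two mollifier moments on the right are \emph{integer-power} moments of a Dirichlet polynomial, and hence computable by character orthogonality: $\sum_\chi |M|^{2b}$ reduces, for $y^b < q$, to a diagonal sum and gives $\sim C\phi(q)(\log y)^{(kb)^{2}}$ (Selberg--Delange applied to the divisor convolution $d_{kb}$), while the twisted second moment $\sum_\chi |L|^2 |M|^{2(b-a)}$ is handled by known asymptotics for twisted second moments of Dirichlet $L$-functions of modulus $q$ against a fixed Dirichlet polynomial of length $y^{b-a}$. The left-hand side is evaluated by the approximate functional equation and orthogonality, producing a main term of order $\phi(q)(\log q)^{k}$. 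Substituting the three asymptotics and optimizing $p_1, p_2, p_3$ and $\theta$ should recover exactly the exponent $k^2$ on the logarithm.

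The main obstacle is designing the H\"older inequality: it must (i) be valid with three strictly positive exponents for every rational $k\in(0,1)$, (ii) involve only non-negative-integer-power moments of $M$ so that each right-hand factor is evaluable by orthogonality, and (iii) after optimization yield the sharp exponent $k^2$ rather than the strictly smaller exponent $2k-1$ that a naive Cauchy--Schwarz produces. A secondary difficulty is handling the twisted second moment when $|M|^{2(b-a)}$ has length close to $q^{1/2}$, which forces careful use of a Rankin--Selberg-type asymptotic for $\sum_\chi |L(\hf,\chi)|^2 B(\chi)$ uniform in long Dirichlet polynomials $B$. The connection with non-vanishing mentioned in the abstract arises because this combination of mollifier choice and H\"older optimization is essentially the one used by Iwaniec--Sarnak and Soundararajan to prove positive-proportion non-vanishing for $L(\hf,\chi)$, and indeed the present lower bound together with Heath-Brown's upper bound implies such non-vanishing by Cauchy--Schwarz.
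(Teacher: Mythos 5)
Your proposal founders at exactly the step you flag as the ``main obstacle'': the three-term H\"older inequality cannot be made to work with those three factors, and there is in fact nothing to optimize. Writing $L\overline{M}=f_1f_2f_3$ with $|f_1|^{p_1}=|L|^{2k}$, $|f_2|^{p_2}=|L|^{2}|M|^{2(b-a)}$, $|f_3|^{p_3}=|M|^{2b}$ forces the exponent-matching conditions $2k/p_1+2/p_2=1$ and $2(b-a)/p_2+2b/p_3=1$, which together with $1/p_1+1/p_2+1/p_3=1$ determine $(p_1,p_2,p_3)$ uniquely. For $k=2/3$ (so $a=2$, $b=3$) the forced weight is $1/p_2=-1/10<0$, so the inequality is not even available. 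For $k=1/2$ the weights are $(2/3,1/6,1/6)$, and inserting the true sizes $\sum_\chi L(\hf,\chi)\overline{M(\chi)}\asymp\phi(q)(\log q)^{1/2}$, $\sum_\chi|L|^{2}|M|^{2}\asymp\phi(q)(\log q)^{9/4}$, $\sum_\chi|M|^{4}\asymp\phi(q)(\log q)$ yields only $\M_{1/2}(q)\gg\phi(q)(\log q)^{-1/16}$, far short of the target exponent $k^2=1/4$. The structural reason is that H\"older is sharp (up to constants) only when every factor is heuristically the same moment $\sum_\chi|L|^{2k}$; with your Dirichlet polynomial $M\approx L^{+k}$ the twisted-second-moment factor is heuristically $\sum_\chi|L|^{2+2k(b-a)}$, whose exponent is at least $2>2k$, and no nonnegative integer powers of such an $M$ can bring it down. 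This is precisely the obstruction Heath-Brown pointed out for $0<k<1$, and it is why a straight adaptation of Rudnick--Soundararajan with an $L^{k}$-type polynomial cannot give $k^2$.

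The missing idea is to introduce a genuine mollifier, i.e.\ a short polynomial approximating a \emph{negative} power of $L$, alongside a fractional-power proxy: with $k=r/s$, take $P\approx L^{1/2s}$ and $M\approx L^{-1/s}$, and compare $S_l=\sum_\chi L(\hf,\chi)\overline{P}^{2s}|M|^{2(s-r)}$ with $S_u=\sum_\chi|L|^{2}|P|^{4s}|M|^{2(2s-r)}$, $\sum_\chi|L|^{2k}$ and $\sum_\chi|P|^{4r}$; each of these is heuristically $\asymp\phi(q)(\log q)^{k^2}$, so the H\"older/Cauchy chain is balanced and only a twisted second moment of $L$ is ever needed. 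Even granting that setup, the remaining work is not covered by ``known asymptotics'': the diagonal evaluations lead to multidimensional contour integrals containing fractional powers of $\zeta$ in both numerator and denominator (non-meromorphic integrands with coefficients of both signs), and obtaining a \emph{lower} bound for the $S_l$-integral requires an additional device (in the paper, taking $P$ of length $x=y^{a}$ with $a$ large so that one sequence of singularities dominates). As written, your argument would stall at the H\"older step and, even if repaired, would still need this analytic input.
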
 

For example, combined with Heath-Brown's work \cite{HB1}, this gives us the following unconditional estimate for the average size of $L(1/2, \chi)$:
$$\frac{1}{\phi(q)} \sum_{\chi\neq \chi_0} |L(1/2, \chi)| \asymp (\log q)^{1/4}.
$$

  The proof of the theorem should generalize to prove the analogous result for any family of $L$-functions for which we can take a twisted second moment.

\begin{rmk1}
If the result above holds with sufficient uniformity with respect to $k$, we would have that
$$\# \{\chi \textup{ mod } q: L(1/2, \chi)\neq 0\} = \lim_{k\rightarrow 0^+} \M_k(q) \gg \phi(q),
$$thus proving that a positive proportion of such $L$-functions do not vanish at the critical point.  Thus, it should come as no surprise that mollifiers play a role in our proof, even though mollifiers do not appear in the method of Rudnick and Soundararajan \cite{RS}.  We remind the reader that the now standard approach to proving such non-vanishing results relies heavily on mollification.  See \cite{Snonvanishing} for example.  \footnote{We refrain from attempting to prove our result with such uniformity as this would be an overly complicated method to prove non-vanishing compared to the standard mollifying argument.  We mention this connection mainly to point out this special feature of small moments.}  
\end{rmk1}

The proof begins with the same approach as the method of Rudnick and Soundararajan in \cite{RS}.  As mentioned in the above remark, we introduce short mollifiers which allows us to study smaller moments than before.  However, it also introduces some initially unexpected subtleties into the problem.

A careful choice reduces the problem to evaluating certain moments where the highest power of $L(s, \chi)$ is $2$.  These moments can be evaluated by taking the diagonal terms, which reduces the problem to evaluating the integrals (\ref{integral_lower_bound}) and (\ref{integral_upp_bound}).  This turns out to be a delicate task.  In general, it is onerous to asymptotically evaluate even specific instances of these integrals - see \cite{KMV} for the asymptotic evaluation of a simpler integral.  Previous work has dealt with integrals of the same shape, but which either contain only integer powers of $\zeta$ (in which case the integrand is meromorphic) or positive powers of $\zeta$ (in which case the coefficients of the Dirichlet series are positive).  These cases are substantially simpler than the integrals which appear in our method.  However, our task is made somewhat simpler by the fact that we only require the correct order of magnitude for these integrals.  

In the next section, we outline the proof of the theorem and explain the new features of the problem in more detail. 

\section{An outline of the proof}
Let $k = \frac{r}{s},$ for coprime positive integers $r$ and $s$, and for a positive rational number $\alpha,$ define
$$ P_{\alpha}(\chi) :=  \sum_{n \leq x} \frac{d_\alpha(n) \chi(n)}{\sqrt{n}}\frac{\log \frac{x}{n}}{\log x},$$
and 
$$ M_{\alpha}(\chi) :=  \frac{1}{2}\sum_{n \leq y} \frac{d_\alpha(n) \mu(n)\chi(n)}{\sqrt n} \frac{\log^2 \frac{y}{n}}{\log^2 y},$$
where as usual, $d_r(n)$ denotes the coefficients of $\zeta(s)^r$ as defined in Chapter II.5 in \cite{Ten}.  
The reader should think of $x$ and $y$ as small powers of $q$.  On average, we shall expect $P_\alpha(\chi)$ to behave like $L(1/2, \chi)^\alpha$, and the mollifier $M_\alpha(\chi)$ to behave like $L(1/2, \chi)^{-\alpha}$.  We let $x = y^a$, where $a$ is a large fixed positive constant to be determined later and which depends only on $r$ and $s$, and further demand that $x^{4s} \leq q^{1/20}$.
\begin{rmk1} \label{remS} Since $0 < k < 1,$ $s \geq 2.$ This fact will be used later in the proof. 
\end{rmk1}
Define 
$$ S_l := \sum_{\chi \neq \chi_0}  L( \hf, \chi) \overline{\Pf}^{2s} |\Mf|^{2(s - r)} $$
and 
$$ S_u := \sum_{\chi \neq \chi_0} \left|L\left( \hf, \chi \right)\right|^{2} |\Pf|^{4s} |\Mf|^{2(2s - r)}.$$
By H\"older's inequality and Cauchy's inequality, 
\begin{align*}
|S_l| &\leq \left( \sum_{\chi \neq \chi_0}  \left|L\left( \hf, \chi \right)\right|^{k} |\Pf|^{2r} \right)^{\frac{1}{2 - k}} (S_u)^{\frac{1 - k}{2 - k}} \\
&\leq \left(\sum_{\chi \neq \chi_0} \left|L\left( \hf, \chi \right)\right|^{2k}\right)^{\frac{1}{2(2-k)}}\left(\sum_{\chi \neq \chi_0} |\Pf|^{4r}\right)^{\frac{1}{2(2-k)}} (S_u)^{\frac{1 - k}{2 - k}}.
\end{align*}
As in the method of Rudnick and Soundararajan, it is important that the sums appearing above should all have the same size, since then H\"older's and Cauchy's inequalities are essentially sharp. Indeed, we will show that $S_u \ll \phi(q)(\log q)^{k^2} \ll S_l$. The theorem will then follow.  

\begin{rmk1}
We have motivated the appearance of mollifiers, but are the $P_{1/2s}(\chi)$ necessary?  For instance, why not write $|L(1/2, \chi)|^2$ in $S_l$ instead of $L(1/2, \chi) \overline{P_{1/2s}(\chi)}^{2s}$?  Having $|L(1/2, \chi)|^2$ in $S_l$ causes a factor of $|L(1/2, \chi)|^4$ to appear in $S_u$.  This leads to the problem of evaluating a twisted fourth moment, which is significantly more challenging to evaluate.  Recently, M. Young \cite{Y} has evaluated the fourth moment with a power saving error term.
\end{rmk1}

For any positive integers $\mathcal{A}, \mathcal{B} , n$ we define
\begin{equation} \label{def:dr/s}
d_{\mathcal{A}/\mathcal{B}}(n, x) := \sum_{\substack{n_1n_2...n_{\mathcal{A}} = n \\ n_i \leq x}} d_{1/\mathcal{B}}(n_1)...d_{1/\mathcal{B}}(n_{\mathcal{A}})\frac{\log \tfrac{x}{n_1}}{\log x}...\frac{\log \tfrac{x}{n_{\mathcal{A}}}}{\log x}.
\end{equation}
Then
\begin{align*}
\sum_{\chi \neq \chi_0} |\Pf|^{4r} &= \sum_{\chi \neq \chi_0} \sum_{n,m \leq x^{2r}} \frac{d_{2r/2s}(n, x) d_{2r/2s}(m, x) \chi (n) \overline{\chi} (m)}{\sqrt{nm}} \\
& \ll \phi(q) \sum_{n \leq x^{2r}} \frac{d_{2r/2s}^2(n, x)}{n} \\
& \ll \phi(q) \sum_{n \leq x^{2r}} \frac{d_{r/s}^2(n)}{n} \ll \phi(q)(\log q)^{r^2/s^2},
\end{align*}
Note that the last inequality follows from Lemma 2 in \cite{HB1}.

To bound $S_l$ and $S_u$, we take advantage of the fact that both $\Pf$ and the mollifier $\Mf$ are short, which allows us to apply orthogonality.  This will convert the sums into contour integrals as in the following propositions.

\begin{prop} \label{prop:integralLower}Let $\wone = (w_1,...,w_{s})$, $\wtwo = (w_{s + 1},...,w_{2s})$, and $\w = (w_1,...,w_{2s})$.  Similarly define $\zone = (z_1,...,z_{s - r})$, $\ztwo = (z_{s - r + 1},...,z_{2(s - r)})$, and $\z = (z_1,...,z_{2(s - r)})$.  We will write symbols like $d\vec w$ with the obvious interpretation. Then  
$$S_l = \frac{\phi(q) I_l}{(\log x)^{2s}(\log y)^{4s-4r}} + O\bfrac{q}{\log q},$$ where for $c>0$,
\begin{align} \label{integral_lower_bound} 
&I_l =  \frac{1}{(2\pi i)^{4s-2r + 1} }\int_{(c)^{4s - 2r + 1}} \frac{\prod_{1 \leq i \leq 2s} \zeta^{1/2s}(1 + w_0 + w_{i}) \prod_{1 \leq  j, k \leq s - r} \zeta^{1/s^2}(1 + z_j + z_{s - r + k})}{ \prod_{\substack{1 \leq i \leq 2s \\ 1 \leq j \leq s - r}} \zeta^{1/2s^2}(1 + w_{i} + z_{j}) \prod_{1 \leq k \leq s - r} \zeta^{1/s}(1 + w_0 + z_{s - r + k})}   \\
& \ \ \ \ \ \ \ \ \ \ \ \ \ \ \ \ \ \ \ \ \ \ \ \ \ \ \ \ \ \ \ \ \ \ \ \ \ \ \ \ \ \ \  \cdot X^{w_0}\Gamma(w_0) \frac{x^{w_1 + ... +w_{2s}}y^{ z_1 + ... + z_{2(s - r)}}}{w_1^2...w_{2s}^2z_1^3...z_{2(s-r)}^3}  \eta_l(w_0, \vec{w}, \vec{z}) \> dw_0 \> d\vec{w} \> d\vec{z}, \nonumber
\end{align}
where $\eta_l(w_0, \vec{w}, \vec{z})$ is an absolutely convergent Euler product for $w_i, z_i$ in the domain ${\rm Re}(w_i), {\rm Re}(z_i) \geq -3/16.$ 
\end{prop}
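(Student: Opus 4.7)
My plan is to convert each factor of $S_l$ into a Mellin--Barnes contour integral, interchange summation with integration, apply orthogonality of Dirichlet characters modulo the prime $q$, and identify the diagonal contribution as the Euler product in \eqref{integral_lower_bound}. For $L(\hf,\chi)$ I use the approximate functional equation, splitting it into $\sum_n \chi(n)e^{-n/X}/\sqrt n$ plus a dual term with $\epsilon_\chi\overline\chi(n)$; the first piece has the exact Mellin representation
$$\sum_n \frac{\chi(n)}{\sqrt n}\,e^{-n/X} \;=\; \frac{1}{2\pi i}\int_{(c)} L(\hf+w_0,\chi)\,X^{w_0}\Gamma(w_0)\,dw_0 \qquad (c>\hf),$$
with $X$ chosen as a small fixed power of $q$. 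The polynomial and mollifier weights open through
$$\frac{\log(x/n)}{\log x}=\frac{1}{\log x}\cdot\frac{1}{2\pi i}\int_{(c)}\frac{(x/n)^w}{w^2}\,dw,\qquad \frac{\log^2(y/n)}{2\log^2 y}=\frac{1}{\log^2 y}\cdot\frac{1}{2\pi i}\int_{(c)}\frac{(y/n)^z}{z^3}\,dz,$$
which automatically enforce the truncations $n\le x$ and $n\le y$. Opening $\overline{\Pf}^{2s}$ and $|\Mf|^{2(s-r)}$ in this manner accounts for the factor $(\log x)^{-2s}(\log y)^{-4(s-r)}$ and produces an integrand built from $\prod_{i=1}^{2s}L(\hf+w_i,\overline\chi)^{1/2s}$ together with Dirichlet series $H_{1/s}(\hf+z_j,\psi):=\sum_n d_{1/s}(n)\mu(n)\psi(n)/n^{\hf+z_j}=\prod_p\bigl(1-\psi(p)/(sp^{\hf+z_j})\bigr)$ for $\psi\in\{\chi,\overline\chi\}$. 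This $H_{1/s}$ factors as $L^{-1/s}$ times an absolutely convergent Euler product.

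Swapping summation past the $(4s-2r+1)$-fold contour and invoking orthogonality,
$$\sum_{\chi\neq\chi_0}\chi(A)\overline\chi(B)=\begin{cases}\phi(q)-1 & A\equiv B\pmod q,\\ -1 & A\not\equiv B\pmod q,\end{cases}$$
whenever $(AB,q)=1$. The hypothesis $x^{4s}\le q^{1/20}$ together with the Gamma-factor decay in $w_0$ keeps every Dirichlet polynomial of length a small power of $q$, so the off-diagonal, principal-character, and $\epsilon_\chi$-dual contributions combine into the error $O(q/\log q)$. The diagonal $A=B$ yields $\phi(q)$ times a multiple Dirichlet series whose local factor at each prime $p\nmid q$ is obtained by averaging the $\chi(p)$-dependence of the product of local Euler factors. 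The four dominant pairings produce exactly the zetas in \eqref{integral_lower_bound}: the $L(\hf+w_0,\chi)$ factor with each $L(\hf+w_i,\overline\chi)^{1/2s}$ gives $\zeta(1+w_0+w_i)^{1/2s}$; each positive $H$-factor paired with each negative one gives $\zeta(1+z_j+z_{s-r+k})^{1/s^2}$; $L(\hf+w_0,\chi)$ paired with each negative $H$-factor contributes $\zeta(1+w_0+z_{s-r+k})^{1/s}$ in the denominator; and each $L(\hf+w_i,\overline\chi)^{1/2s}$ paired with each positive $H$-factor contributes $\zeta(1+w_i+z_j)^{1/2s^2}$ in the denominator. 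Dividing these factors out prime by prime leaves a local remainder $1+O(p^{-1-\delta})$, uniformly for $\mathrm{Re}(w_i),\mathrm{Re}(z_i)\ge -3/16$, and the product over all $p$ defines the absolutely convergent Euler product $\eta_l(w_0,\vec w,\vec z)$.

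The main obstacle, in my view, is establishing the $O(q/\log q)$ bound for the $\epsilon_\chi$-dual contribution from $L(\hf,\chi)$: after opening the mollifiers this becomes an average $\sum_\chi\epsilon_\chi\chi(A)\overline\chi(B)$, which must be handled via $\tau(\chi)\overline\chi(m)=\sum_a\chi(a)e(am/q)$ and reduced to an exponential-sum estimate with sufficient savings, exploiting the smallness of our polynomial lengths. A secondary, more technical hurdle is verifying the uniform bound $1+O(p^{-1-\delta})$ on every local remainder throughout the stated shift region; this requires a prime-by-prime comparison against the displayed zeta factors and the convergence of $\sum_p p^{-1-\delta}$, which is essentially what dictates the cutoff at $-3/16$.
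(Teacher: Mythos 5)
Your handling of the heart of the proposition --- opening the weights $\log(x/n)/\log x$ and $\log^2(y/n)/\log^2 y$ through the $1/w^2$ and $1/z^3$ kernels, applying orthogonality for the prime modulus, and reading off the four pairings with exponents $1/2s$, $1/s^2$, $-1/s$, $-1/2s^2$ together with an absolutely convergent Euler product $\eta_l$ in $\mathrm{Re}\ge -3/16$ --- coincides with the paper's computation (their factorization into $A_{\vec w}$, $B_{\vec w}$, $C_{\vec z}$, $D_{\vec z}$ and the sums over $a,b$ with $ma=nb$). The genuine gap is in your representation of $L(1/2,\chi)$. You use an approximate functional equation with $X$ a \emph{small} power of $q$, which forces a dual term carrying the root number $\epsilon_\chi$, and you yourself leave its estimation as the ``main obstacle,'' to be ``reduced to an exponential-sum estimate with sufficient savings.'' As written, the proposition is therefore not proved: nothing in the proposal shows that the $\epsilon_\chi$-contribution is $O(q/\log q)$, and whether it even can be bounded trivially depends on how large $X$ is, which you never fix. (If you pursue this route, it is not really an exponential-sum problem: writing $\epsilon_\chi=\tau(\chi)/(i^{\mathfrak a}\sqrt q)$ and using $\sum_{\chi \bmod q}\tau(\chi)\chi(a)\overline\chi(b)=\phi(q)\,e(a^{-1}b/q)$ for $(ab,q)=1$, with the even/odd split handled as the paper does for $S_u$, reduces the dual term to additive characters; a trivial bound then gives roughly $q\,X^{-1/2}q^{1/20+\epsilon}$, admissible once $X$ exceeds a suitable fixed power of $q$ --- but this still has to be carried out.)

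The paper sidesteps the issue entirely: it sets $X=q^{5/4}$, \emph{larger} than the conductor, so that shifting $\frac{1}{2\pi i}\int_{(1)}L(s+\tfrac12,\chi)\Gamma(s)X^s\,ds$ to $\mathrm{Re}\,s=-1/2$ and invoking the convexity bound $L(it,\chi)\ll_t q^{1/2}\log q$ gives $L(\tfrac12,\chi)=\sum_{m\le X^{1+\epsilon}}\chi(m)e^{-m/X}/\sqrt m+O(q^{-1/8}\log q)$ with no root number at all. The price is that the smoothed sum is longer than $q$, so genuine off-diagonal terms $ma=nb+ql$ appear; these are then killed by divisor-function estimates, exactly the step your small-$X$ choice would render vacuous. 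Note also that the $X$ in the statement you are proving is this $q^{5/4}$: with your choice of $X$ you would be proving a variant, and the later contour shifts (where $X^{w_0}$ is used with $\mathrm{Re}\,w_0<0$ to make $E_1\ll 1$) would have to be re-checked for that choice. So either adopt the paper's long one-sided representation, or supply the missing Gauss-sum argument for the dual term with an explicit, sufficiently large choice of $X$; at present the proposal asserts the key error bound rather than proving it.
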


\begin{prop} \label{prop:integralUpper} We define $\wone = (w_1,...,w_{2s})$, $\wtwo = (w_{2s + 1},...,w_{4s})$, and $\w = (w_1,...,w_{4s})$.  Similarly define $\zone = (z_1,...,z_{2s - r})$, $\ztwo = (z_{2s - r + 1},...,z_{2(2s - r)})$, and $\z = (z_1,...,z_{2(2s - r)})$.  Then 
$$ S_u \ll \frac{\phi(q) I_u}{(\log x)^{4s}(\log y)^{8s - 4r}} + \frac{q}{\log q},$$
where for $c>0$,
\begin{align} \label{integral_upp_bound}
& I_u = \frac{1}{\Gamma\left( \tfrac{1}{4}\right)^2 } \frac{1}{(2\pi i)^{8s - 2r + 1}}\int_{(c)^{8s - 2r + 1}} \frac{\prod_{1 \leq j \leq 4s} \zeta^{1/2s}(1 + w_0 + w_j)}{\prod_{1 \leq j \leq 4s - 2r} \zeta^{1/s}(1 + w_0 + z_{j})}\\
& \hspace{1in}\cdot \frac{\prod_{1 \leq i, j \leq 2s } \zeta^{1/4s^2} (1 + w_i + w_{2s + j}) \prod_{1 \leq i, j \leq 2s-r} \zeta^{1/s^2} (1 + z_i + z_{2s - r + j}) }{\prod_{\substack{1 \leq i \leq 2s \\ 1 \leq j \leq 2s - r}} \zeta^{1/2s^2}(1 + w_i + z_{2s - r + j})\zeta^{1/2s^2}(1 + w_{2s + i} + z_{j}) }  \nonumber \\
& \hspace{1.2in} \cdot \Gamma\left( \frac{1}{4} + \frac{w_0}{2}\right)^2 \zeta(1 + 2w_0) \left( \frac{q}{\pi}\right)^{w_0} \frac{x^{w_1 + ... + w_{4s}}y^{z_1 + ... + z_{4s - 2r}}}{w_1^2...w_{4s}^2z_1^3...z_{4s - 2r}^3} \eta_u(w_0, \vec{w}, \vec{z}) \frac{dw_0}{w_0} \> d\w \> d\z \nonumber
\end{align} 
where $\eta_{u}(w_0, \vec{w}, \vec{z})$ is an absolutely convergent Euler product for $w_i, z_i$ in the domain ${\rm Re}(w_i) , {\rm Re}(z_i)\geq -3/16.$ 
\end{prop}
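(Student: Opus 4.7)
The plan is to follow the same architecture used to prove Proposition \ref{prop:integralLower}, the new feature being the presence of the factor $|L(\hf,\chi)|^2$. First I would rewrite every Dirichlet polynomial as a Mellin--Barnes integral via
\[
\frac{\log(x/n)}{\log x} = \frac{1}{\log x}\cdot\frac{1}{2\pi i}\int_{(c)}(x/n)^w \frac{dw}{w^2}, \qquad \frac{\log^2(y/n)}{\log^2 y} = \frac{2}{\log^2 y}\cdot\frac{1}{2\pi i}\int_{(c)}(y/n)^z \frac{dz}{z^3},
\]
so that each of the $4s$ factors in $|\Pf|^{4s}=\Pf^{2s}\overline{\Pf}^{2s}$ becomes an integral of $L(\hf+w_j,\chi)^{1/(2s)}$ or $L(\hf+w_j,\overline{\chi})^{1/(2s)}$ carrying an overall $(\log x)^{-1}$, and each of the $4s-2r$ factors in $|\Mf|^{2(2s-r)}$ becomes an integral of $L(\hf+z_j,\chi)^{-1/s}$ or $L(\hf+z_j,\overline{\chi})^{-1/s}$ carrying an overall $(\log y)^{-2}$. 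For $|L(\hf,\chi)|^2$ itself I would use the Mellin representation coming from the functional equation,
\[
|L(\hf,\chi)|^2 = \frac{1}{\Gamma(\tfrac14)^2}\cdot\frac{1}{2\pi i}\int_{(c)}\Gamma\!\left(\tfrac14+\tfrac{w_0}{2}\right)^2 L(\hf+w_0,\chi)L(\hf+w_0,\overline{\chi})\bfrac{q}{\pi}^{w_0}\frac{dw_0}{w_0},
\]
valid (after the standard splitting into even and odd characters, or simply as a majorant sufficient for an upper bound) for $c>0$. Together these substitutions install $\Gamma(\tfrac14)^{-2}$, the gamma and conductor factors $\Gamma(\tfrac14+w_0/2)^2(q/\pi)^{w_0}$, and the outer normalising powers of $\log x$ and $\log y$.

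With the sum over $\chi$ interchanged with all of the contour integrals (justified by absolute convergence on lines $\operatorname{Re}(w_j),\operatorname{Re}(z_j)=c>0$), the residual character sum is $\sum_{\chi\neq\chi_0}\chi(N)\overline{\chi}(M)$, where $N$ and $M$ collect the summation variables attached to $\chi$ and $\overline{\chi}$ respectively. Orthogonality yields $\phi(q)\mathbf{1}_{N\equiv M\,(\bmod\, q),\,(NM,q)=1} - \mathbf{1}_{(NM,q)=1}$. The $-\mathbf{1}$ contribution is controlled by pulling absolute values inside the integrals and using the trivial bound on each Dirichlet polynomial; summed, this produces the error term $O(q/\log q)$ stated in the proposition. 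In the congruence term, the total length of the combined series is $N, M \ll x^{4s}y^{4s-2r}$, which under the standing hypothesis $x^{4s}\leq q^{1/20}$ is much smaller than $q$, so the only admissible solutions to $N\equiv M\pmod{q}$ are diagonal, $N=M$.

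The resulting diagonal sum is multiplicative and thus factors as an Euler product. The local factor at a prime $p$ is a finite sum over distributions of powers of $p$ among the $4s+2$ ``positive'' slots (the two $L$'s contributing at index $1$, and the $4s$ $P$'s at index $1/(2s)$) and the $4s-2r$ ``negative'' slots (the $M$'s at index $-1/s$, carrying $\mu$), subject to equal total exponent on the $\chi$- and $\overline{\chi}$-halves. A direct computation shows this local factor equals a product of $(1-p^{-1-w_i-w_j})^{\pm\,\mathrm{exp}}$ times $1+O(p^{-2+\varepsilon})$, where the exponent for each pair of slots is the product of their two indices. Taking the Euler product over $p$ assembles exactly the ratio of $\zeta$-factors shown in (\ref{integral_upp_bound}): exponent $\tfrac{1}{2s}$ for each $(w_0,w_j)$-pair, $-\tfrac{1}{s}$ for each $(w_0,z_j)$-pair, $\tfrac{1}{4s^2}$ for cross-pairs between $\wone$ and $\wtwo$, $\tfrac{1}{s^2}$ for cross-pairs between $\zone$ and $\ztwo$, and $-\tfrac{1}{2s^2}$ for $P$--$M$ cross-pairs. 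The factor $\zeta(1+2w_0)$ emerges from the $n=m$ diagonal inside $L(\hf+w_0,\chi)L(\hf+w_0,\overline{\chi})$, and the leftover absolutely convergent Euler product is $\eta_u$.

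The main obstacle is the combinatorial bookkeeping: one must check that only \emph{cross-side} pairings (e.g.\ $\wone$ against $\wtwo$) generate genuine poles while same-side pairings (e.g.\ $w_i$ with $w_j$ for $i,j\leq 2s$) are absorbed into $\eta_u$, so that the block structure $(\wone,\wtwo)$ and $(\zone,\ztwo)$ is preserved; and one must verify that $\eta_u$ is absolutely convergent in $\operatorname{Re}(w_i),\operatorname{Re}(z_i)\geq -3/16$, equivalently that each local factor differs from the displayed singular part by $1+O(p^{-1-3/8})$. This is tedious but runs in complete parallel with the analogous step for Proposition \ref{prop:integralLower}, and the statement then reads off directly.
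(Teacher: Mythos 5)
Your outline follows the same general architecture as the paper (Mellin--Barnes representations for $\Pf$ and $\Mf$, a $w_0$-integral for $|L(\hf,\chi)|^2$, orthogonality, then an Euler-product factorization of the diagonal), but there is a genuine gap at the orthogonality step: you claim that the combined lengths satisfy $N,M\ll x^{4s}y^{4s-2r}<q$, so that $N\equiv M\pmod q$ forces $N=M$. That is false here. The moduli $N=ma$ and $M=nb$ contain the variables $m,n$ coming from $|L(\hf,\chi)|^2$, and these are not short: in the representation you wrote down they run over all positive integers (and absolute convergence of the Dirichlet series of $L(\hf+w_0,\chi)L(\hf+w_0,\overline{\chi})$ requires $\Re w_0>1/2$, not merely $c>0$), while in the paper's version they carry the weight $\Wf_{\af}\left(\frac{q}{\pi mn}\right)$, which only truncates effectively at $mn\lesssim q^{1+\epsilon}$. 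Hence off-diagonal solutions $ma\equiv \pm nb\pmod q$ with $ma\neq nb$ exist in abundance, and estimating them is where most of the work in the paper's proof of this proposition lies: one first discards the range $mn>q^{1+\epsilon}$ using the rapid decay of $\Wf_{\af}$ for arguments below $1$, and then bounds the remaining off-diagonal terms ($q\mid ma\pm nb$, $ma\pm nb\neq 0$, $mn\le q^{1+\epsilon}$) by the divisor-type argument used for (\ref{errorinlowerbound}); these contributions, together with the non-principal correction in orthogonality, are what produce the $q/\log q$ error term, which your write-up attributes solely to the latter. Without this off-diagonal analysis the proposition is not proved.

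Two smaller points. First, the identity you state for $|L(\hf,\chi)|^2$ holds only for even characters (for odd $\chi$ the gamma factor is $\Gamma(3/4+w_0/2)^2$) and is missing a factor of $2$ coming from the functional-equation reflection; the paper genuinely splits into even and odd characters and uses the even/odd orthogonality relations of Lemma \ref{lem:sumoverevenodd}, which is also what produces the congruence $ma\equiv\pm nb\pmod q$ --- the $-nb$ class is a further family of off-diagonal terms your sketch does not account for. Since the statement is only an upper bound, the constant and the parity bookkeeping are harmless, but the $\pm$ congruence is not. Second, your Euler-product bookkeeping of the diagonal (cross-side pairings yield the displayed zeta exponents, same-side pairings and higher prime powers are absorbed into $\eta_u$, and the diagonal sum over the auxiliary variable yields $\zeta(1+2w_0)$) is consistent with the paper's computation and is fine as a plan.
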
 
We now give a short description of how one can write down the essential parts of the integrals from the sums directly.  Take the sum $S_l$ for example.  Associate $w_0$ with $L(1/2, \chi)$, $w_i$ with $\overline{\Pf}$ for $1\leq i\leq 2s$, $z_j$ with $\Mf$ for $1\leq j\leq s-r$ and $z_{s-r+k}$ with $\overline{\Mf}$ for $1\leq k\leq s-r$.  Then pair each factor containing $\chi$ with a factor containing $\overline{\chi}$ (this comes from applying orthogonality).  Pairing $L(1/2, \chi)$ with $\overline{\Pf}$ gives the factors $\zeta^{1/2s}(1+w_0+w_i)$ for $1\leq i\leq 2s$, pairing $L(1/2, \chi)$ with $\overline{\Mf}$ gives the factors $\zeta^{-1/s}(1+w_0+z_{s-r+k})$ for $1\leq k\leq s-r$, pairing $\Mf$ with $\overline{\Pf}$ gives $\zeta^{-1/2s^2}(1+w_i+z_j)$ for $1\leq i\leq 2s$ and $1\leq j\leq s-r$, and pairing $\Mf$ with $\overline{\Mf}$ gives $\zeta^{1/s^2}(1 + z_j + z_{s-r+k})$ for $1 \leq j, k \leq s-r.$  The detailed proofs of Proposition \ref{prop:integralLower} and \ref{prop:integralUpper} reside in \S \ref{sec:contourLower} and \S \ref{sec:contourUpper}, respectively.

The Propositions above have reduced the problem to one of bounding $I_u$ and $I_l$.  We state the bounds in the following proposition.
\begin{prop} \label{prop:boundforIuIl} Let $I_u$ and $I_l$ be as in Proposition 
\ref{prop:integralUpper} and \ref{prop:integralLower}. Then 
\begin{equation} \label{prop:upperbound}
I_u \ll (\log q)^{\frac{r^2}{s^2} + 12s - 4r},
\end{equation}
and 
\begin{equation} \label{prop:lowerbound}
I_l \gg (\log x)^{2s+1}(\log y)^{\frac{r^2}{s^2} + 4s-4r - 1},
\end{equation}
where the implied constant may depend on $r$ and $s.$
\end{prop}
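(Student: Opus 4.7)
The strategy for both bounds is to shift the $w_0$ contour to extract the dominant residue, then rescale $w_i \mapsto w_i/\log x$ and $z_j \mapsto z_j/\log y$ in the remaining variables. After this rescaling, the monomial factor $x^{w_1+\cdots}y^{z_1+\cdots}/(w_1^2 \cdots z_1^3\cdots)$ produces a bounded exponential integrand against Jacobian factors $(\log x)^{\bullet}(\log y)^{\bullet}$ that exactly match the denominators in Propositions \ref{prop:integralLower} and \ref{prop:integralUpper}. Near the origin each zeta factor is expanded as $\zeta^{\alpha}(1+u)=u^{-\alpha}g_{\alpha}(u)$ with $g_\alpha$ analytic and nonvanishing at $0$, so the rescaled integrand becomes a product of branches of power functions of the integration variables, multiplied by a convergent analytic factor inherited from the $\eta_u, \eta_l$. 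Tracking the total fractional exponent then gives the stated power of $\log$.

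For the upper bound on $I_u$, shift the $w_0$ contour to the left of the double pole of $\zeta(1+2w_0)\Gamma(1/4+w_0/2)^2(q/\pi)^{w_0}/w_0$ at $w_0=0$; the derivative of $(q/\pi)^{w_0}$ produces the factor $\log q$. The remaining integrand is then bounded in absolute value, using $|\zeta^{\alpha}(1+u)|\ll |u|^{-\alpha}$ on the rescaled contours. Counting the contribution of each fractional power together with the Jacobian gives the exponent $r^2/s^2+12s-4r$.

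For the lower bound on $I_l$, shift $w_0$ past the simple pole of $\Gamma(w_0)$ at $w_0=0$; the main contribution is obtained by setting $w_0=0$ throughout the remaining integrand, while the resulting tail is negligible thanks to the rapid decay of $\Gamma$. After rescaling with $x=y^a$ for $a$ chosen sufficiently large depending on $r,s$, the cross-factors $\zeta^{\pm 1/2s^2}(1+w_i+z_j)$ and $\zeta^{-1/s}(1+z_{s-r+k})$ are close to a constant on the rescaled contours, so the integrand decouples approximately into a $\vec w$-block and a $\vec z$-block. Each block is a finite-dimensional Hankel-type integral that can be evaluated in closed form as a positive constant depending only on $r$ and $s$, with the error from treating the cross-factors as constant being of smaller order.

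The main obstacle is establishing positivity of the lower bound on $I_l$. Because the integrand contains branch cuts from the fractional powers of $\zeta$, the sign of the multivariate integral is not apparent from the integrand, and standard techniques for meromorphic or positive-coefficient integrands do not directly apply. The large-$a$ decoupling is the key technical device: it reduces the problem to a product of standard one-variable Hankel integrals, whose positivity and value can be read off from beta-function identities. The freedom to choose $a=a(r,s)$ large, built into the construction of $\Pf$ and $\Mf$, is essential for this step, and it is also what makes it sufficient to obtain only the correct order of magnitude rather than a true asymptotic.
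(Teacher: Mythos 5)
Your overall architecture matches the paper's: shift $w_0$ and keep the residue, truncate and rescale the remaining variables, exploit $x=y^a$ with $a$ large to decouple the $\vec w$-variables from the $\vec z$-variables, and evaluate the $w$-integrals as Hankel-type integrals; the $I_u$ bound as you sketch it is essentially the paper's argument (modulo the truncation of the contours to bounded segments, needed so that $|\zeta^{\pm\alpha}(1+u)|\ll |u|^{\mp\alpha}$ applies near the real axis with logarithmic bounds on the tails, and the bookkeeping showing the exponent is maximal when every variable sits on the short segment). However, your treatment of the lower bound has a genuine gap. The cross-factors are not ``close to a constant'' on the relevant contours: with ${\rm Re}\, z_j = 1/\log y$ and $|{\rm Im}\, z_j|$ as large as $1/\sqrt{\log y}$, one has $\zeta^{-1/s}(1+z_{s-r+k})\sim z_{s-r+k}^{1/s}$, and after the $w_i$ are moved to a Hankel loop of extent $O(\sqrt a/\log x)$ one has $\zeta^{-1/2s^2}(1+w_i+z_j)=z_j^{1/2s^2}\bigl(1+O(1/\sqrt a)\bigr)$. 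These are functions of the $z$'s varying by powers of $\log y$ over the contour; they must be carried into the $z$-block, where they convert $z^{-3}$ into $z^{-(3-1/s)}$ and are exactly what produces the exponent $r^2/s^2+4s-4r-1$. Replacing them by constants gives a $\log y$ exponent that is too large by $2-2r/s$, and since these factors are small on the contour such a replacement is not even legitimate for a lower bound: the ``error'' from discarding them is not of smaller order.

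More seriously, after the $w$-integrations the remaining $\vec z$-integral is not a product of one-variable Hankel integrals evaluable by beta-function identities. It is
\[
\int_{K_3^{2s-2r}} \prod_{1\le j,k\le s-r}\zeta^{1/s^2}(1+z_j+z_{s-r+k}) \prod_{1\le i\le 2(s-r)} \frac{y^{z_i}}{z_i^{3-1/s}}\, d\vec z ,
\]
in which the factors $\zeta^{1/s^2}(1+z_j+z_{s-r+k})$ genuinely couple the variables (they blow up when ${\rm Im}\, z_j+{\rm Im}\, z_{s-r+k}\to 0$) and carry fractional exponents, so there is no closed-form evaluation and no decoupling into single-variable integrals. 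The positivity you correctly identify as the main obstacle therefore has to come from another source: the paper's Lemma \ref{lem:lowerboundz} expands each $\zeta^{1/s^2}$ into its Dirichlet series with nonnegative coefficients $d_{1/s^2}(n)$, integrates termwise to obtain factors $\log^{\alpha-1}\bigl(y/\prod n_{i,j}\bigr)$, and uses positivity of the coefficients to truncate to $n_{i,j}\le y^{1/2m}$, yielding only a lower bound of the correct order, $\gg(\log y)^{\gamma}$, not a closed form. Without this positivity input (or an equivalent), your argument does not establish $I_l\gg(\log x)^{2s+1}(\log y)^{r^2/s^2+4s-4r-1}$; the large-$a$ decoupling alone handles the $w$-block and the $w$--$z$ cross terms, but not the $z$-block.
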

From Proposition \ref{prop:boundforIuIl}, we deduce immediately that $S_u \ll (\log q)^{r^2/s^2} \ll S_l$ since $x$ and $y$ are powers of $q$, which proves the main result.  We now describe a short heuristic for determining the size of the integrals.  Examine $I_l$, and add up the total order of singularities in the integrand.  The $\zeta^{1/2s}(1+w_0+w_i)$ terms contribute $\frac{1}{2s} 2s =1$, the $\zeta^{-1/s}(1+w_0+z_{s-r+k})$ gives $-\frac{1}{s} (s-r) = -\frac{s-r}{s}$, the $\zeta^{-1/2s^2}(1+w_i+z_j)$ terms gives $-\frac{1}{2s^2} (2s)(s-r) = -\frac{s-r}{s}$  and the $\zeta^{1/s^2}(1 + z_j + z_{s-r+k})$ terms contribute $\frac{1}{s^2}(s-r)^2.$  Moreover, $\Gamma(w_0)$, $\frac{1}{w_i^2}$, and $\frac{1}{z_j^3}$ gives $1+4s+6(s-r)$.  Adding these up and taking away the number of variables gives $\frac{r^2}{s^2}+2s+4(s-r)$.  Note that we expect $I_l \asymp (\log q)^{\frac{r^2}{s^2}+2s+4(s-r)}$.

The upper bound for $I_u$ is relatively straightforward, and proven in \S \ref{sec:evaluateIu}.  Rigorously proving the lower bound for $I_l$ is quite delicate.  Evaluating these integrals typically consists of considering many sequences of singularities which all contribute terms of the same size.  We refer the reader to \cite{KMV} for an actual example which appear in applications.  For now, consider the simple example of $\int_{(c)^2} \zeta^\alpha(1+w+z) \frac{x^wy^z}{w^2z^3} dw dz$.  Shifting the contour in $w$ to the left gives two singularities, one at $w=0$ and one at $w=-z$.  Then shifting $z$ to the left gives the singularity $z=0$.  This gives two sequences of singularities to consider, namely $w=0, z=0$ and $w=-z$ and $z=0$.  In $I_l$, the number of such sequences to consider grows arbitrarily large as $r$ and $s$ grow.  In order to prove a lower bound, one would then need to show that the contribution from these sequences do not cancel out.  We avoid this complexity by making the mollifier $\Mf$ much shorter\footnote{This is done through our choice $x = y^a$.} compared to the Dirichlet polynomials $\Pf$.  This has the effect of making the $w_0=0$, $w_i=0$, $z_i=0$ sequence the dominant one, in the sense that the coefficients in front of the contributions from other sequences are much smaller, depending on $a$.  This is carried out in \S \ref{sec:evaluateIl}.

\section{The contour integral for $S_l$} \label{sec:contourLower}
In this section, we will prove Proposition \ref{prop:integralLower}. First we will write $L(1/2, \chi)$ in terms of Dirichlet series by applying a standard technique with Perron's formula to 
$$\frac{1}{2\pi i} \int_{(1)} L\left(s + \tfrac{1}{2}, \chi\right)\Gamma(s)X^sds.$$ 
We set $X = q^{5/4}$, and see from the calculus of residues that

\begin{equation} \label{eqn:approxforL}
 L(\hf, \chi) = \sum_{m \leq X^{1+\epsilon}} \frac{\chi(m)}{\sqrt m} e^{-m/X} + O\left(\frac{\log q}{q^{1/8}}\right),
\end{equation}
where we have shifted contours to $\Re s = -1/2$ and using the bound $L(it, \chi) \ll_t q^{1/2}\log q$.
 
Therefore
\begin{align} \label{S_1}
 S_l = \sum_{\chi} \sum_{m \leq X^{1+\epsilon}} \frac{\chi(m)}{\sqrt m} e^{-m/X}& \overline{\Pf}^{2s} |\Mf|^{2(s - r)}  \\
& + O\left( \frac{\log q}{q^{1/8}}\sum_{\chi } |\Pf|^{2s} |\Mf|^{2(s - r)} \right). \nonumber
\end{align}
We can write 
$$ |\Pf|^{2s} |\Mf|^{2(s - r)} = \sum_{m, n \leq x^sy^{s-r}} \frac{c_mc_n \chi(m)\bar{\chi}(n)}{\sqrt{mn}},$$
where $c_n \ll_{\epsilon} n^{\epsilon}.$ Since $x^sy^{s-r} \leq x^{4s} \leq q^{1/10},$ applying orthogonality gives
$$ \frac{\log q}{q^{1/8}} \sum_{\chi} |\Pf|^{2s} |\Mf|^{2(s - r)} \ll  q^{9/10} \sum_{n \leq x^sy^{s - r}} \frac{c_n^2}{n} \ll \frac{q}{\log q}.$$

Now we consider the main term, which is
\begin{align*}
& \sum_{\chi} \sum_{m \leq X^{1+\epsilon} } \frac{\chi(m)}{\sqrt m } e^{-m/X} \overline{\Pf}^{2s} |\Mf|^{2(s - r)} \\
&= \phi(q) \sum_{n \leq x^{2s}} \sum_{a, b \leq y^{s - r}} \sum_{\substack{m \leq X^{1+\epsilon} \\ ma \equiv nb \ ({\rm mod} \ q)} }\frac{e^{-m/X} d_{2s/2s} (n,x) d^*_{(s-r)/s}(a,y) d^*_{(s-r)/s}(b,y)}{\sqrt{mnab}},
\end{align*}
where $d_{2s/2s}(n, x)$ is defined as in (\ref{def:dr/s}), and for any positive integers $\mathcal{A}, \mathcal{B}, n$ we define
$$ d^*_{\mathcal{A}/\mathcal{B}}(n, y) = \frac{1}{2^{\mathcal{A}}}\sum_{\substack{n_1...n_{\mathcal{A}} = n \\ n_i \leq y}} d_{1/\mathcal{B}}(n_1)...d_{1/\mathcal{B}}(n_{\mathcal{A}})\mu(n_1)...\mu(n_{\mathcal{A}})\frac{\log^2 \frac{y}{n_1}}{\log^2 y}...\frac{\log^2 \frac{y}{n_{\mathcal{A}}}}{\log^2 y},$$
The main term will arise from the diagonal term $ma = nb.$ First we will compute the contribution of the off-diagonal terms. Without loss of generality we may write $ma = nb + ql,$ where $1 \leq l \leq y^{s-r}X^{1+\epsilon}/q.$ 

Since $x^{s}y^{s-r} \leq q^{1/10}$, $d_{2s/2s} (n,x) \ll_{\epsilon} n^{\epsilon}$, $d^*_{(s-r)/s}(b,y) \ll_{\epsilon} b^{\epsilon}$ and 
$$\sum_{ma = nb + ql} | d^*_{(s-r)/s}(a,y)| \leq \frac{1}{2^{s-r}}\sum_{ma = nb + ql} d_{(s - r)/s}(a) \ll d_{\frac{s-r}{s} + 1}(nb + ql) \ll (nb + ql)^{\epsilon} \ll q^{\epsilon},$$ the contribution of the off-diagonal terms is 
\begin{align} \label{errorinlowerbound}
\ll \phi(q) \sum_{n \leq x^{2s}} \sum_{ b \leq y^{s - r}} \frac{d_{2s/2s} (n,x)  |d^*_{(s-r)/s}(b,y)|}{\sqrt{nb}} \sum_{l \leq y^{s-r}q^{1/4 + \epsilon}} \frac{1}{\sqrt{ql}} \sum_{ma = nb + ql} |d^*_{(s-r)/s}(a,y)| \ll \frac{q}{\log q}.
\end{align}
We will now consider the diagonal term. In the sequel, $\int_{(c)^{l}}$ shall denote a $l$-times interated integral on the vertical line real part $= c.$ 
By Perron's formula, for $c > 1/2,$ we write
\begin{equation} \label{eqn:Pf}
 \Pf = \frac{1}{\log x}\frac{1}{2\pi i} \int_{(c)} \sum_{m} \frac{d_{1/2s}(m)\chi(m)}{\sqrt m} \left(\frac{x}{m}\right)^w \> \frac{dw}{w^2}
\end{equation}
and 
\begin{equation} \label{eqn:Mf}
\Mf =  \frac{1}{\log^2 y} \frac{1}{2\pi i} \int_{(c)} \sum_{a} \frac{d_{1/s}(a)\mu(s)\chi(a)}{\sqrt a} \left(\frac{y}{a}\right)^z \> \frac{dz}{z^3}.
\end{equation}
 Therefore 
\begin{equation} \label{eqn:Pf_2s}
\Pf^{s} = \frac{1}{(\log x)^s}\frac{1}{(2\pi i)^s} \int_{(c)^{s}} \sum_m \frac{\chi(m)}{\sqrt{m}} \sigma_{\wone} (m) \frac{x^{w_1 + ... + w_{s}}}{w_1^2...w_{s}^2} \> d\wone 
\end{equation}
and 
\begin{equation}\label{eqn:Mf_SminusR}
\Mf^{s -r} = \frac{1}{(\log y)^{2s-2r}} \frac{1}{(2\pi i)^{s-r}} \int_{(c)^{s - r}} \sum_a \frac{\chi(a)}{\sqrt{a}} \rho_{\zone} (a) \frac{y^{z_1 + ... + z_{s - r}}}{z_1^3...z_{s-r}^3} \> d\zone, 
\end{equation}
where for any $k$, we define
\begin{align} \label{def:SigmaAndDelta}
\sigma_{w_1,...,w_k} (n) &= \sum_{n_1...n_k = n} \frac{d_{1/2s}(n_1)...d_{1/2s}(n_k)}{n_1^{w_1}...n_s^{w_k}}, \hspace{1in} \textup{and}\\
\rho_{z_1,...,z_k} (a) &= \sum_{a_1...a_k = a} \frac{d_{1/s}(a_1)...d_{1/s}(a_k)\mu(a_1)...\mu(a_k)}{a_1^{z_1}...a_k^{z_k}}. \nonumber 
\end{align}
From (\ref{eqn:Pf_2s}) and (\ref{eqn:Mf_SminusR}) and the fact that $x^{2s} < q,$ the diagonal term is
\begin{align*}
& \frac{1}{(\log x)^{2s}(\log y)^{4s-4r}}\frac{\phi(q)}{(2\pi i)^{4s - 2r + 1}} \int_{(c)^{4s - 2r + 1}}  \sum_{ma = nb} \frac{1}{\sqrt{mnab}} \frac{1}{m^{w_0}} \sigma_{\w} (n)   \rho_{\zone} (a) \rho_{\ztwo} (b)  \\
& \ \ \ \ \ \ \ \ \ \ \ \ \ \ \ \ \ \ \ \ \ \ \ \ \ \ \ \ \ \ \ \ \ \ \ \ \ \ \ \ \ \ \ \ \ \ \ \ \ \ \ \cdot X^{w_0}\Gamma(w_0)\frac{x^{w_1 + ...+ w_{2s}} y^{ z_1 + ... + z_{2(s - r)}}}{w_1^2...w_{2s}^2z_1^3...z_{2(s-r)}^3} \>dw_0 \>d\vec{w} \>d\vec{z}
\end{align*}
where $d\vec{w} = \> dw_1 ...dw_{2s}$ and $d\vec{z} = \> dz_1 ...dz_{2(s - r)}.$  Since $ma = nb,$ there exists $\alpha$ such that $m = \frac{\alpha b}{(a,b)}$ and $n = \frac{\alpha a}{(a,b)}$.  Thus the above expression equals
\begin{align*}
& \frac{1}{(\log x)^{2s}(\log y)^{4s-4r}}\frac{\phi(q)}{(2\pi i)^{4s - 2r + 1}}  \int_{(c)^{4s - 2r + 1}} \sum_{a,b} \frac{(a,b)^{1 + w_0}}{ab^{1 + w_0}} \rho_{\zone} (a) \rho_{\ztwo} (b) \sum_{\alpha} \frac{\sigma_{\w} \left(\frac{\alpha a}{(a,b)}\right)}{\alpha^{1 + w_0}} \\
& \ \ \ \ \ \ \ \ \ \ \ \ \ \ \ \ \ \ \ \ \ \ \ \ \ \ \ \ \ \ \ \ \ \ \ \ \ \ \ \ \ \ \ \ \ \ \ \ \ \ \ \cdot X^{w_0}\Gamma(w_0)\frac{x^{w_1 + ... + w_{2s}}y^{ z_1 + ... + z_{2(s - r)}}}{w_1^2...w_{2s}^2z_1^3...z_{2(s-r)}^3}  \>dw_0 \> d\vec{w} \> d\vec{z},
\end{align*}
 Let $h = \frac{a}{(a,b)}$. Since $\sigma_{\vec{w}} (n)$ is multiplicative in $n$, we write the sum over $\alpha$ as an Euler product of the form 
$$A_{\w} \cdot B_{\w}(h).$$
Here,
$$A_{\w}  =  \prod_{p}  \left(\sum_{j = 0}^{\infty} \frac{\sigma_{ \w} (p^j)}{p^{j(1 + w_0)} }\right) = \prod_{1 \leq i \leq 2s} \zeta^{1/2s}(1 + w_0 + w_{i})\eta(w_0, \vec{w}), $$
where $\eta(w_0, \vec{w})$ is absolutely convergent and bounded for $w_i$ in the
domain ${\rm Re}(w_i) \geq -3/16, $ and 
\begin{align*}
 B_{\w}(h) &=  \prod_{p | h } \frac{\frac{\sum_{j = 0}^{\infty} \sigma_{\w}(p^{j + h_p})}{p^{j(1 + w_0)}}}{{\sum_{j = 0}^{\infty} \frac{\sigma_{\w}(p^j)}{p^{j(1 + w_0)}}}},
\end{align*}
where $h_p$ is the highest power of $p$ dividing $h.$
Hence the diagonal term is
\begin{align*}
&\frac{1}{(\log x)^{2s}(\log y)^{4s-4r}}\frac{\phi(q)}{(2\pi i)^{4s - 2r + 1}}  \int_{(c)^{4s - 2r + 1}}  \sum_{a, b} \frac{(a,b)^{1 + w_0}}{ab^{1 + w_0}} \rho_{\zone} (a) \rho_{\ztwo} (b)  A_{\w} \cdot B_{\w}\left(\frac{a}{(a,b)}\right) \\
& \ \ \ \ \ \ \ \ \ \ \ \ \ \ \ \ \ \ \ \ \ \ \ \ \ \ \ \ \ \ \ \  \ \ \ \ \ \ \ \ \ \ \ \ \ \ \ \ \ \ \ \cdot  X^{w_0}\Gamma(w_0)\frac{x^{w_1 + ...+ w_{2s}}y^{ z_1 + ... + z_{2(s - r)}}}{w_1^2...w_{2s}^2z_1^3...z_{2(s-r)}^3} \> dw_0 \> d\vec{w} \> d\vec{z}\\
&=  \frac{1}{(\log x)^{2s}(\log y)^{4s-4r}}\frac{\phi(q)}{(2\pi i)^{4s - 2r + 1}}   \int_{(c)^{4s - 2r + 1}}  A_{\w} \sum_{\substack{a, b \\ (a,b) = 1}}  \frac{B_{\w}(a)}{ab^{1 + w_0}} \sum_{d}  \frac{\rho_{\zone} (ad) \rho_{\ztwo} (bd)}{d}   \\
& \ \ \ \ \ \ \ \ \ \ \ \ \ \ \ \ \ \ \ \ \ \ \ \ \ \ \ \ \ \ \ \ \ \ \ \ \  \ \ \ \ \ \ \ \  \ \ \ \ \ \ \ \ \cdot X^{w_0}\Gamma(w_0)\frac{x^{w_1 + ... +w_{2s}}y^{ z_1 + ... + z_{2(s - r)}}}{w_1^2...w_{2s}^2z_1^3...z_{2(s-r)}^3}  \> dw_0 \> d\vec{w} \> d\vec{z}
\end{align*}
Since $\rho_{\zone} (n)$ is multiplicative and $(a,b) = 1$, the Euler product of the sum over $d$ is
$$ C_{\z} \cdot D_{\z}(a,1) \cdot  D_{\z}(1,b),$$
where 
$$C_{\z}  =   \prod_{p} \left(\sum_{j = 0}^{\infty} \frac{\rho_{\zone} (p^j)\rho_{\ztwo}(p^j)}{p^j}\right) = \prod_{1 \leq j, k \leq s - r} \zeta^{1/s^2}(1 + z_j + z_{s - r + k})\eta_1(\vec{z}), $$
where $\eta_1(\vec{z})$ is absolutely convergent and bounded for $z_i$ in the
domain ${\rm Re}(z_i) \geq -3/16, $ and 
\begin{align*}
 D_{\z}(a, 1) &= \prod_{p | a }  \left(\frac{\sum_{j = 0}^{\infty} \frac{\rho_{\zone} (p^{j+ a_p})\rho_{\ztwo}(p^{j})}{p^j}}{\sum_{j = 0}^{\infty} \frac{\rho_{\zone} (p^j)\rho_{\ztwo}(p^j)}{p^j}}\right) \hspace{0.1in} \textup{and} \\
D_{\z}(1, b) &=  \prod_{p | b } \left( \frac{\sum_{j = 0}^{\infty} \frac{\rho_{\zone} (p^{j})\rho_{\ztwo}(p^{j + b_p})}{p^j}}{\sum_{j = 0}^{\infty} \frac{\rho_{\zone} (p^j)\rho_{\ztwo}(p^j)}{p^j}}\right),
\end{align*}
where $a_p$ and $b_p$ are the highest power of $p$ dividing $a$ and $b$ respectively. 

Now we have that the diagonal term is 
\begin{align*}
&= \frac{1}{(\log x)^{2s}(\log y)^{4s-4r}}\frac{\phi(q)}{(2\pi i)^{4s - 2r + 1}} \int_{(c)^{4s - 2r + 1}} A_{\w} C_{\z}  \sum_{\substack{a, b \\ (a,b) = 1}}  \frac{B_{\w}(a)D_{\z}(a,1) D_{\z}(1,b)}{ab^{1 + w_0}}   \\
& \ \ \ \ \ \ \ \ \ \ \ \ \ \ \ \ \ \ \ \ \ \ \ \ \ \ \ \ \ \ \ \ \ \ \ \ \  \ \ \ \ \ \ \ \  \ \ \ \ \ \ \ \ \cdot  X^{w_0}\Gamma(w_0) \frac{x^{w_1 + ... +w_{2s}}y^{ z_1 + ... + z_{2(s - r)}}}{w_1^2...w_{2s}^2z_1^3...z_{2(s-r)}^3}  \> dw_0 \> d\vec{w} \> d\vec{z}
\end{align*}
We will now consider the sums over $a$ and $b.$
\begin{align*}
&\sum_{\substack{a, b \\ (a,b) = 1}}  \frac{B_{\w}(a)D_{\z}(a,1) D_{\z}(1,b)}{ab^{1 + w_0}} 
= \sum_a  \frac{B_{\w}(a)D_{\z}(a,1) }{a} \sum_{\substack{b \\ (a,b) = 1}}  \frac{D_{\z}(1,b)}{b^{1 + w_0}} \\
&= \sum_a  \frac{B_{\w}(a)D_{\z}(a,1) }{a} \frac{1}{\prod_{p|a} 1 + \frac{1}{p} \eta_3(w_0, \vec{w}, \vec{z}, p)} 
\prod_{1 \leq k \leq s - r} \zeta^{-1/s}(1 + w_0 + z_{s - r + k})\eta_4(w_0, \vec{w}, \vec{z}) \\
&= \prod_{\substack{1 \leq i \leq 2s \\ 1 \leq j \leq s - r}} \zeta^{-1/2s^2}(1 + w_{i} + z_{j}) \prod_{1 \leq k \leq s - r} \zeta^{-1/s}(1 + w_0 + z_{s - r + k})\eta_5(w_0, \vec{w}, \vec{z}),
\end{align*}
where each $\eta_i(w_0, \vec{w}, \vec{z})$ appearing above is given by an Euler product with real coefficients which is absolutely convergent for $w_i, z_i$ in the domain ${\rm Re}(z_i), {\rm Re}(w_i) \geq -3/16$.

Combining all computations above, we obtain that the diagonal term is 
$\frac{\phi(q) I_l}{(\log x)^{2s}(\log y)^{4s-4r}}$. This proves Proposition \ref{prop:integralLower}.

\section{The contour integral for $S_u$} \label{sec:contourUpper}  
In this section, we will prove Proposition \ref{prop:integralUpper}. 
 Recall that $S_u$ is defined to be
$$ \sum_{\chi \neq \chi_0} \left|L\left( \hf, \chi \right)\right|^{2} |\Pf|^{4s} |\Mf|^{2(2s - r)}.$$
We first start by writing the approximate functional equation for $|L(1/2, \chi)|^2$. Let $\xi(s, \chi)$ be a completed $L$-function,
$$ \xi(s,\chi) = \left(\frac{q}{\pi}\right)^{(s + \af)/2} \Gamma\left( \frac{s + \af}{2}\right) L(s,\chi),$$
where 
$$ \af = \left\{ \begin{array}{ll} 0 & {\rm if} \,\,\,\, \chi(-1) = 1 \\
1 & {\rm otherwise.}\end{array} \right.$$
For $c > 1/2$ and by expanding $L(1/2 + s, \chi) L(1/2 + s, \overline{\chi})$ into Dirichlet series, we obtain
\begin{align*}
& \frac{1}{2\pi i} \int_{(c)} \xi\big(\hf + w, \chi\big) \xi\big(\hf + w, \overline{\chi}\big) \frac{dw}{w} \\
&= \sum_{m,n = 1}^{\infty} \frac{\chi(m) \overline{\chi}(n)}{\sqrt{mn}} \left(\frac{q}{\pi}\right)^{1/2 + \af} \cdot \frac{1}{2\pi i} \int_{(c)} \left( \frac{q}{\pi mn}\right)^w \Gamma\left(\frac{1}{4} + \frac{w + \af}{2}\right)^2 \> \frac{dw}{w} \\
&= \left(\frac{q}{\pi}\right)^{1/2 + \af} \Gamma\left(\frac{1}{4} + \frac{\af}{2} \right)^2 \sum_{m,n = 1}^{\infty} \frac{\chi(m) \overline{\chi}(n)}{\sqrt{mn}} \Wf_{\af} \left(\frac{q}{\pi mn}\right),
\end{align*}
where 
$$ \Wf_{\af}(x) = \frac{1}{2\pi i} \int_{(c)} \frac{\Gamma\left(\frac{1}{4} + \frac{w + \af}{2}\right)^2}{\Gamma\left(\frac{1}{4} + \frac{\af}{2} \right)^2 } x^w \> \frac{dw}{w}.$$
\begin{rmk1} \label{rem:Wf}
When $x > 1,$ moving the line of integration to the left to $(-1/2 + \epsilon),$ we obtain that $\Wf_{\af}(x) = 1 + O(x^{-1/2 + \epsilon}).$ On the other hand, when $x < 1,$ we obtain that $\Wf_{\af}(x) \ll_{k} x^{k}$ for any $k > 0$ by moving the line of integration to the right. 
\end{rmk1}
Moving the the line of integration to $(-c)$ and using the functional equation
$$ \xi\big(\hf + w, \chi\big) \xi\big(\hf + w, \overline{\chi}\big) = \xi\big(\hf - w, \chi\big) \xi\big(\hf - w, \overline{\chi}\big)$$
we have
\begin{align*}
& \frac{1}{2\pi i} \int_{(c)} \xi\big(\hf + w, \chi\big) \xi\big(\hf + w, \overline{\chi}\big) \frac{dw}{w} \\
&= \left(\frac{q}{\pi}\right)^{1/2 + \af} \Gamma\left(\frac{1}{4} + \frac{\af}{2} \right)^2  |L(1/2, \chi)|^2 + \frac{1}{2\pi i} \int_{(-c)} \xi\big(\hf + w, \chi\big) \xi\big(\hf + w, \overline{\chi}\big) \frac{dw}{w} \\
&= \left(\frac{q}{\pi}\right)^{1/2 + \af} \Gamma\left(\frac{1}{4} + \frac{\af}{2} \right)^2  |L(1/2, \chi)|^2 - \frac{1}{2\pi i} \int_{(c)} \xi\big(\hf + w, \chi\big) \xi\big(\hf + w, \overline{\chi}\big) \frac{dw}{w}. 
\end{align*}
 Therefore
\begin{equation} \label{lsquareapprox}
|L(1/2, \chi)|^2 = 2 \sum_{m, n} \frac{\chi(m) \overline{\chi}(n)}{\sqrt{mn}} \Wf_{\af} \left(\frac{q}{\pi mn}\right).
\end{equation}
Since even and odd characters have different gamma factors, we will split the sum over primitive characters into the sum over even characters and odd characters. However the treatment of both cases are the same so we will concentrate on the sum over even characters. We shall use the following lemma for the orthogonality relation for the primitive, even characters.
\begin{lemma} \label{lem:sumoverevenodd} Let $\sum_{\chi}^{(e)}, \sum_{\chi}^{(o)}$ indicate the sum over non-trivial primitive even (respectively odd) characters. Then
$$ {\sum_{\chi}}^{(e)} \chi(a) = \left\{ \begin{array}{ll} \frac{\phi(q) - 2}{2} & {\rm if} \ \ a \equiv \pm 1 \ ({\rm mod} \ q ) \\
-1 & {\rm if} \ \ a  \not\equiv \pm 1 \ ({\rm mod} \ q ) \ {\rm and} \ (a,q) = 1,  \end{array} \right. $$
and 
$$ {\sum_{\chi}}^{(o)} \chi(a) = \left\{ \begin{array}{ll} \frac{\phi(q) }{2} & {\rm if} \ \ a \equiv 1 \ ({\rm mod} \ q ) \\
\frac{-\phi(q)}{2} & {\rm if} \ \ a \equiv -1 \ ({\rm mod} \ q ) \\
0 & {\rm if} \ \ a  \not\equiv \pm 1 \ ({\rm mod} \ q ) \ {\rm and} \ (a,q) = 1.  \end{array} \right.$$
\end{lemma}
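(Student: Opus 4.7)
The plan is to derive both formulas from the standard orthogonality relation for all Dirichlet characters modulo $q$, together with the parity projectors $\frac{1\pm\chi(-1)}{2}$. Since $q$ is prime (Remark \ref{remS} gives $s\geq 2$, but more relevantly the hypothesis of the theorem is that $q$ is prime), every non-trivial character is primitive, so the superscripts $(e)$ and $(o)$ simply mean summing over non-trivial even and non-trivial odd characters respectively.

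For the even sum, I would write
\[
{\sum_{\chi}}^{(e)} \chi(a) \;=\; \sum_{\chi \neq \chi_0} \frac{1+\chi(-1)}{2}\chi(a) \;=\; \frac{1}{2}\sum_{\chi}\chi(a) \;+\; \frac{1}{2}\sum_{\chi}\chi(-a) \;-\; \chi_0(a),
\]
where the last term removes the principal character's contribution (which is $1$ since we assume $(a,q)=1$ --- otherwise both sides are zero). Applying the standard relation $\sum_{\chi}\chi(b) = \phi(q)$ if $b\equiv 1\pmod q$ and $0$ otherwise, one checks all three cases: if $a\equiv 1$ the first sum gives $\phi(q)$ and the second gives $0$, yielding $\phi(q)/2 - 1 = (\phi(q)-2)/2$; if $a\equiv -1$ the roles reverse and the answer is the same; and if $a\not\equiv\pm 1$ both sums vanish, leaving $-1$.

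For the odd sum, the principal character is even so it contributes nothing to the odd projector, and I would write
\[
{\sum_{\chi}}^{(o)} \chi(a) \;=\; \sum_{\chi} \frac{1-\chi(-1)}{2}\chi(a) \;=\; \frac{1}{2}\sum_{\chi}\chi(a) \;-\; \frac{1}{2}\sum_{\chi}\chi(-a).
\]
The same three-case analysis now produces $\phi(q)/2$ when $a\equiv 1$, $-\phi(q)/2$ when $a\equiv -1$, and $0$ otherwise, which matches the statement.

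There is no real obstacle here; the only subtlety is keeping track of which value of $\chi_0$ must be subtracted in each case (subtracted for even since $\chi_0$ is even, not subtracted for odd), and confirming that the hypothesis $(a,q)=1$ is in force so that $\chi_0(a)=1$. Both checks are immediate given that $q$ is prime.
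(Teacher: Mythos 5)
Your proof is correct and follows essentially the same route as the paper: insert the parity projector $\frac{1\pm\chi(-1)}{2}$, sum over all characters mod $q$, subtract the principal character's contribution (only in the even case), and apply the orthogonality relation $\sum_{\chi}\chi(b)=\phi(q)$ or $0$ according as $b\equiv 1 \pmod q$ or not. The case analysis you carry out matches the paper's (more tersely stated) verification.
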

\begin{proof} We can write
 $$ {\sum_{\chi}}^{(e)} \chi(a) = \sum_{\chi} \left( \frac{1 + \chi(-1)}{2} \right) \chi(a) -  1,$$
and 
$$ {\sum_{\chi}}^{(o)} \chi(a) = \sum_{\chi} \left( \frac{1 - \chi(-1)}{2} \right) \chi(a).$$
The lemma follows from the fact that $\sum_{\chi} \chi(a) = 0$ when $a \not\equiv 1$ and $\phi(q)$ otherwise. 
\end{proof}
Let us write 
$$ |\Pf|^{4s} |\Mf|^{2(2s - r)} = \sum_{a,b \leq x^{2s}y^{2s - r}} \frac{y_a y_b}{\sqrt{ab}}\chi(a)\overline{\chi}(b).$$
Note that $y_a \ll_{\epsilon} a^{\epsilon}$ for any $\epsilon.$

From (\ref{lsquareapprox}), Lemma \ref{lem:sumoverevenodd} and the fact that $x^{2s}y^{2s-r} < q^{1/20}$, we have
\begin{align*}
& {\sum_{\chi}}^{(e)} |L(1/2, \chi)|^2|\Pf|^{4s} |\Mf|^{2(2s - r)} \\
&= 2\sum_{a,b \leq x^{2s}y^{2s - r}} \frac{y_a y_b}{\sqrt{ab}} \sum_{m, n} \frac{1}{\sqrt{mn}} \Wf_{\af} \left(\frac{q}{\pi mn}\right) {\sum_{\chi}}^{(e)} \chi(ma) \overline{\chi}(nb) \\
&= \phi(q) \sum_{a,b \leq x^{2s}y^{2s - r}} \frac{y_a y_b}{\sqrt{ab}}  \sum_{\substack{m,n \\ ma \equiv \pm nb \,{\rm mod} \, q}} \frac{1}{\sqrt{mn}} \Wf_{\af} \left(\frac{q}{\pi mn}\right) + O\left( \sum_{a,b \leq x^{2s}y^{2s - r}} \frac{q^{\epsilon}}{\sqrt{ab}}  \sum_{m,n} \frac{1}{\sqrt{mn}} \Wf_{\af} \left(\frac{q}{\pi mn}\right)  \right)
\end{align*}
By Remark \ref{rem:Wf}, the error term is 
$$ \ll x^{2s}y^{2s - r}q^{\epsilon} \sum_{d} \frac{d^{\epsilon}}{\sqrt{d}} \Wf_{\af}\left(\frac{q}{\pi d}\right) \ll \frac{q}{\log q}.$$
We now estimate the contibution of the off-diagonal terms. We first consider the term $mn > q^{1 + \epsilon}.$ From Remark \ref{rem:Wf}, these terms contribute
\begin{align*}
\ll \sum_{a,b \leq x^{2s}y^{2s - r}} \frac{q^{\epsilon}}{\sqrt{ab}} \sum_{\substack{m, n \\ mn > q^{1 + \epsilon}}} \frac{1}{\sqrt{mn}} \left( \frac{q}{mn}\right)^k \ll x^{2s}y^{2s - r}q^{k + \epsilon} \sum_{d > q^{1 + \epsilon}}\frac{1}{d^{1/2 + k}} \sum_{m | d}  1 \ll \frac{q}{\log q}.
\end{align*}
Hence we assume that $mn < q^{1 + \epsilon}$ and from Remark \ref{rem:Wf}, $\Wf_{\af} \left(\frac{q}{\pi mn}\right) \ll 1.$ These terms contribute
$$ \ll q^{1 + \epsilon} \sum_{a,b \leq x^{2s}y^{2s - r}} \frac{1}{\sqrt{ab}} \sum_{\substack{m, n \\ mn \leq q^{1 + \epsilon} \\ q | ma \pm nb \\ ma \pm nb \neq 0 }} \frac{1}{\sqrt{mn}} \ll \frac{q}{\log q}$$
by the same arguments as showing (\ref{errorinlowerbound}). Now we consider the main term, $ma = nb.$ We can write $m = \frac{\alpha b}{(a,b)}$ and $n = \frac{\alpha a}{(a,b)}.$ By the definition of $\Wf_{\af}(x)$, (\ref{eqn:Pf}), and (\ref{eqn:Mf}), the main term is
\begin{align*}
& \phi(q) \sum_{a,b \leq x^{2s}y^{2s - r}} \frac{(a,b)}{ab} y_a y_b  \sum_{\alpha} \frac{1}{\alpha} \Wf_{\af} \left(\frac{q(a,b)^2}{\pi\alpha^2 ab}\right) \\
&= \frac{1}{\Gamma\left( \tfrac{1}{4}\right)^2 (\log x)^{4s}(\log y)^{8s - 4r}} \frac{\phi(q)}{(2\pi i)^{8s - 2r + 1}}\int_{(c)^{8s - 2r + 1}} \sum_{a,b} \frac{(a,b)}{ab} \psi_{\wone, \zone} (a) \psi_{\wtwo, \ztwo} (b) \Gamma\left( \frac{1}{4} + \frac{w}{2}\right)^2 \\
& \hspace{3in} \sum_{\alpha} \frac{1}{\alpha^{1 + 2w}} \left( \frac{q(a,b)^2}{\pi ab}\right)^w \frac{x^{w_1 + ... + w_{4s}}y^{z_1 + ... + z_{4s - 2r}}}{w_1^2...w_{4s}^2z_1^3...z_{4s - 2r}^3}\frac{dw}{w} \> d\w \> d\z, \\
&= \frac{1}{\Gamma\left( \tfrac{1}{4}\right)^2 (\log x)^{4s}(\log y)^{8s - 4r}} \frac{\phi(q)}{(2\pi i)^{8s - 2r + 1}}\int_{(c)^{8s - 2r + 1}} \zeta(1 + 2w) \sum_{\substack{a,b \\ (a,b) = 1}} \frac{1}{(ab)^{1 + w}} \sum_{d} \frac{\psi_{\wone, \zone} (ad) \psi_{\wtwo, \ztwo} (bd)}{d} \\
& \hspace{3in} \Gamma\left( \frac{1}{4} + \frac{w}{2}\right)^2\left( \frac{q}{\pi}\right)^w \frac{x^{w_1 + ... + w_{4s}}y^{z_1 + ... + z_{4s - 2r}}}{w_1^2...w_{4s}^2z_1^3...z_{4s - 2r}^3}\frac{dw}{w} \> d\w \> d\z,
\end{align*}
where 
$$\psi_{w_1,..., w_k, z_1,..., z_l}(a) = \sum_{a_1...a_kc_1...c_l = a} \frac{d_{1/2s}(a_1)...d_{1/2s}(a_k)d_{1/s}(c_1)...d_{1/s}(c_l)\mu(c_1)...\mu(c_l)}{a_1^{w_1}...a_k^{w_k}c_1^{z_1}...c_{l}^{z_k}}.$$
By the similar arguments to the proof of the integral of $S_l$ and letting $w = w_0$, the above can be written as $\frac{\phi(q) I_u}{ (\log x)^{4s}(\log y)^{8s-4r}}$. This proves Proposition \ref{prop:integralUpper}.

\section{The upper bound for $I_u$} \label{sec:evaluateIu}
In this section we will prove (\ref{prop:upperbound}), which is
$$ I_u \ll (\log q)^{r^2/s^2 + 12s - 4r}.$$

Recall that $I_u$ is
\begin{align*} 
&\frac{1}{\Gamma\left( \tfrac{1}{4}\right)^2} \frac{1}{(2\pi i)^{8s - 2r + 1}}\int_{(c)^{8s - 2r + 1}} \frac{\prod_{1 \leq j \leq 4s} \zeta^{1/2s}(1 + w_0 + w_j)}{\prod_{1 \leq j \leq 4s - 2r} \zeta^{1/s}(1 + w_0 + z_{j})}\\
& \hspace{1in}\cdot \frac{\prod_{1 \leq i, j \leq 2s } \zeta^{1/4s^2} (1 + w_i + w_{2s + j}) \prod_{1 \leq i, j \leq 2s-r} \zeta^{1/s^2} (1 + z_i + z_{2s - r + j}) }{\prod_{\substack{1 \leq i \leq 2s \\ 1 \leq j \leq 2s - r}} \zeta^{1/2s^2}(1 + w_i + z_{2s - r + j})\zeta^{1/2s^2}(1 + w_{2s + i} + z_{j}) }  \nonumber \\
& \hspace{1.2in} \cdot \Gamma\left( \frac{1}{4} + \frac{w_0}{2}\right)^2 \zeta(1 + 2w_0) \left( \frac{q}{\pi}\right)^{w_0} \frac{x^{w_1 + ... + w_{4s}}y^{z_1 + ... + z_{4s - 2r}}}{w_1^2...w_{4s}^2z_1^3...z_{4s - 2r}^3} \eta_u(w_0, \vec{w}, \vec{z}) \frac{dw_0}{w_0} \> d\w \> d\z \nonumber
\end{align*} 
For notational convenience, let 
\begin{align} \label{eqn:Vwz}
V(\w, \z) := &\frac{\prod_{1 \leq j \leq 4s} \zeta^{1/2s}(1 + w_j)}{\prod_{1 \leq j \leq 4s - 2r} \zeta^{1/s}(1 + z_{j})} \cdot \frac{\prod_{1 \leq i, j \leq 2s } \zeta^{1/4s^2} (1 + w_i + w_{2s + j}) \prod_{1 \leq i, j \leq 2s-r} \zeta^{1/s^2} (1 + z_i + z_{2s - r + j}) }{\prod_{\substack{1 \leq i \leq 2s \\ 1 \leq j \leq 2s - r}} \zeta^{1/2s^2}(1 + w_i + z_{2s - r + j})\zeta^{1/2s^2}(1 + w_{2s + i} + z_{j}) }   \\
&  \hspace{4in} \cdot \frac{x^{w_1 + ... + w_{4s}}y^{z_1 + ... + z_{4s - 2r}}}{w_1^2...w_{4s}^2z_1^3...z_{4s - 2r}^3}. \nonumber
\end{align}
We let $c = 1/8$ and shift the contour integral in $w_0$ to $-\tfrac{1}{8} + \frac{1}{\log q}.$ Then we have that 
$$I_u = R + R_1 + E,$$
where
\begin{align} \label{eqn:R}
R &=  \frac{1}{2} \frac{1}{(2\pi i)^{8s - 2r}} \int_{(1/8)^{8s - 2r}} \left[ \left(2\gamma  +  \frac{\Gamma '}{\Gamma} \left( \frac{1}{4} \right) + \log \frac{q}{\pi} \right)\eta_u(0, \vec{w}, \vec{z}) + \eta_u'(0, \vec{w}, \vec{z})\right]  V(\w, \z) \> d\w \> d\z, 
\end{align} 

\begin{align} \label{eqn:Rw}
R_1 &= \frac{1}{2} \frac{1}{(2\pi i)^{8s - 2r}} \int_{(1/8)^{8s - 2r}} \left( \sum_{j = 1}^{4s} \frac{\zeta '}{\zeta} (1 + w_j) + \sum_{j = 1}^{4s-2r} \frac{\zeta '}{\zeta} (1 + z_j)\right)  V(\w, \z) \  \eta_u(0, \vec{w}, \vec{z})  \> d\w \> d\z,
\end{align}
and 
\begin{align*}
E &= \frac{1}{\Gamma\left( \tfrac{1}{4}\right)^2 } \frac{1}{(2\pi i)^{8s - 2r}} \int_{(1/8)^{8s - 2r}} \int_{(-1/8 + \frac{1}{\log q})}  \Gamma\left( \frac{1}{4} + \frac{w_0}{2}\right)^2 \zeta(1 + 2w_0) \frac{\prod_{1 \leq j \leq 4s} \zeta^{1/2s}(1 + w_0 + w_j)}{\prod_{1 \leq j \leq 4s - 2r} \zeta^{1/s}(1 + w_0 + z_{j})} \\
& \hspace{2in}\cdot \frac{\prod_{1 \leq i, j \leq 2s } \zeta^{1/4s^2} (1 + w_i + w_{2s + j}) \prod_{1 \leq i, j \leq 2s-r} \zeta^{1/s^2} (1 + z_i + z_{2s - r + j}) }{\prod_{\substack{1 \leq i \leq 2s \\ 1 \leq j \leq 2s - r}} \zeta^{1/2s^2}(1 + w_i + z_{2s - r + j})\zeta^{1/2s^2}(1 + w_{2s + i} + z_{j}) }  \nonumber \\
& \hspace{3in} \left( \frac{q}{\pi}\right)^{w_0} \frac{x^{w_1 + ... + w_{4s}}y^{z_1 + ... + z_{4s - 2r}}}{w_1^2...w_{4s}^2z_1^3...z_{4s - 2r}^3} \eta_u(w_0, \vec{w}, \vec{z}) \frac{dw_0}{w_0} \> d\w \> d\z. \nonumber
\end{align*}

Here $\gamma$ is the Euler constant. Now we show that $E$ is negligible.
\begin{lemma} \label{lem:errorE}
With notation as above, $E \ll 1$. 
\end{lemma}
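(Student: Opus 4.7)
The plan is to estimate the integrand of $E$ absolutely on the shifted contour, using the fact that moving $w_0$ to $\Re(w_0) = -1/8 + 1/\log q$ introduces the savings factor $(q/\pi)^{w_0}$, which is of size $q^{-1/8 + 1/\log q} \ll q^{-1/8}$. The goal is then to show that the rest of the integrand, once integrated over $w_0,\vec{w},\vec{z}$, contributes at most $(\log q)^{C}$ for some $C = C_{r,s}$, so that $E \ll q^{-1/8}(\log q)^{C_{r,s}} \ll 1$.

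For the $w_0$-integration, Stirling's formula gives $|\Gamma(\tfrac{1}{4} + w_0/2)|^2 \ll (1+|{\rm Im}(w_0)|)^{-3/4}\,e^{-\pi|{\rm Im}(w_0)|/2}$ on the shifted line, providing exponential decay. The factor $\zeta(1+2w_0)$ has argument with real part $3/4 + 2/\log q$ (no pole on the shifted contour), and the convexity bound $\zeta(3/4 + i\tau) \ll (1+|\tau|)^{1/8+\epsilon}$ is easily absorbed by the Gamma decay. For the remaining zeta factors involving $w_0$, namely $\zeta^{\pm\alpha}(1+w_0+w_j)$ and $\zeta^{\pm\alpha}(1+w_0+z_j)$, the arguments have real part $1 + 1/\log q$, so the standard bound $\zeta(\sigma+i\tau) \ll \log(|\tau|+2)$ for $\sigma \geq 1$ together with de la Vall\'ee Poussin's bound $1/\zeta(\sigma+i\tau) \ll \log(|\tau|+2)$ yields $|\zeta^{\pm\alpha}(1+w_0+w_j)| \ll (\log(|{\rm Im}(w_0)|+|{\rm Im}(w_j)|+2))^{|\alpha|}$, and similarly for the $z_j$ variables. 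The Euler product $\eta_u$ is absolutely convergent on the contour since all of $\Re(w_0), \Re(w_i), \Re(z_i)$ lie in the half-plane $\geq -3/16$ where it was shown to converge.

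For the $\vec{w}$ and $\vec{z}$ integrations on the line $\Re = 1/8$, the factors $1/w_j^2$ and $1/z_j^3$ provide absolute convergence, and absorb the polylogarithmic growth in $|{\rm Im}(w_j)|, |{\rm Im}(z_j)|$ that comes from the zeta factors. Combining these pointwise bounds and integrating yields
\begin{equation*}
|E| \ll q^{-1/8} \int_{-\infty}^{\infty}(1+|t|)^{O(1)}e^{-\pi|t|/2}\,dt \cdot (\log q)^{C_{r,s}} \ll q^{-1/8}(\log q)^{C_{r,s}} \ll 1,
\end{equation*}
which proves the lemma. The main obstacle is bookkeeping: one must check that the polylog bounds on the (many) fractional zeta factors combine uniformly in the imaginary parts of all $8s-2r+1$ integration variables, and that the exponential decay from the single Gamma factor together with the polynomial decay from $1/w_j^2,1/z_j^3$ suffices to control the resulting product. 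This is routine but somewhat tedious given the complexity of the integrand.
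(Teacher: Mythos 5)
Your strategy is the same as the paper's: estimate the integrand of $E$ absolutely on the shifted contour, using Stirling decay for $\Gamma(\tfrac14+\tfrac{w_0}{2})^2$, a convexity bound for $\zeta(1+2w_0)$, bounds for the fractional zeta powers (whose arguments have real part $1+\tfrac{1}{\log q}$ or $1+\tfrac14$), and the factors $w_j^{-2}$, $z_j^{-3}$ for convergence in the remaining variables. However, there is a genuine omission: you never account for the factor $x^{w_1+\cdots+w_{4s}}y^{z_1+\cdots+z_{4s-2r}}$ in the integrand. On the lines $\Re w_j=\Re z_j=\tfrac18$ this factor has modulus $(x^{4s}y^{4s-2r})^{1/8}$, which is a positive power of $q$ (recall $x$ and $y$ are fixed small powers of $q$), not a power of $\log q$; so your claim that the rest of the integrand contributes at most $(\log q)^{C}$ is false as stated, and the asserted bound $E\ll q^{-1/8}(\log q)^{C_{r,s}}$ does not follow from what you wrote. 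The conclusion survives only because of the length restriction imposed in \S 2, namely $x^{4s}\le q^{1/20}$ and $y\le x$, which gives $x^{4s}y^{4s-2r}\le q^{1/10}$ and hence $\bigl|x^{w_1+\cdots+w_{4s}}y^{z_1+\cdots+z_{4s-2r}}(q/\pi)^{w_0}\bigr|\ll q^{1/80}\cdot q^{-1/8}<1$ on the shifted contour. This observation is precisely the opening step of the paper's proof (``since $q/2\pi>x^{4s}y^{4s-2r}$\,\dots''), and it is the one place in the lemma where the shortness of the Dirichlet polynomials actually enters, so it cannot be skipped.

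A smaller point: the pointwise bound $|\zeta^{\pm\alpha}(1+w_0+w_j)|\ll\bigl(\log(|\Im w_0|+|\Im w_j|+2)\bigr)^{|\alpha|}$ is not quite correct when the imaginary parts nearly cancel; if $\Im(w_0+w_j)$ is within $O(1/\log q)$ of $0$, the argument lies that close to the pole of $\zeta$ and the factor can be as large as $(\log q)^{|\alpha|}$. The correct statement is of the form $\ll\bigl(\log q+\log(|\Im w_0|+|\Im w_j|+2)\bigr)^{|\alpha|}$. This is harmless for the final estimate, since you already allow a loss of $(\log q)^{C_{r,s}}$, which the power saving in $q$ absorbs once the previous paragraph's correction is made; but as written the intermediate inequality is inaccurate.
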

\begin{proof}
Indeed, since $\frac{q}{2\pi} > x^{4s}y^{4s-2r}$, we have that $ |x^{w_1 + ....+w_{4s}}y^{z_1 + ...+ z_{4s-2r}}|\left|\left(\frac{q}{2\pi}\right)^{w_0}\right| < 1 $  when  $\Re w_0 = -1/8 + \frac{1}{\log q}$, and $\Re w_i, \Re z_i = 1/8$. Let $v, v_1 = w_i$ or $z_i.$ From Theorem 6.7 in \cite{MV}, for any $\epsilon > 0,$ $ -1 < p < 1$, $\Re v$ and $\Re v_1 = 1/8$
$$ \zeta^{p}(1 + w_0 + v) \ll |w_0|^{\epsilon}|v|^{ \epsilon},$$
$$ \zeta^{p}(1 + v + v_1) \ll C.$$
From Stirling's formula, for $w_0 = -\frac{1}{8} + \frac{1}{\log q} + i \tau$
$$ \Gamma\left( \frac{1}{4} + \frac{w_0}{2}\right) \ll e^{-\pi |\tau|/2}. $$
Moreover the convexity bound gives that
$$ \zeta(1 + 2w_0) \ll (|\tau| + 1)^{1/8 + \epsilon}.$$

The integrand inside $E$ is absolutely convergent; hence it is $ \ll 1. $
\end{proof}
We will now evaluate the terms $R$ and $R_1.$  First we shift the contour at the line $\tfrac{1}{8}$ to the line $\tfrac{1}{\log x}$ for $w_1,...,w_{4s}$ and to the line $\tfrac{1}{\log y}$ for $z_1,...,z_{4s-2r}.$ We now proceed to truncate the integrals.  Let $L_1$ be a path consisting of line segments from $\tfrac{1}{\log x} + i$ to $\tfrac{1}{\log x} + i\infty$ and from $\tfrac{1}{\log x} - i\infty$ to $\tfrac{1}{\log x} - i,$ and $L_2$ is the line segment from $\tfrac{1}{\log x} - i$ to $\tfrac{1}{\log x} + i.$  $K_1$ and $K_2$ are defined similar to $L_1$ and $L_2,$ respectively, with $\log x$ replaced by $\log y.$  Note that $L_1$ and $K_1$ are bounded away from $0$ so we expect these to give a small contribution. We will start by deriving an upper bound for $R$, which reduces to bounding

\begin{align} \label{integralofeqnR} 
 \sum_{i_1,...,i_{4s}, j_1, ..., j_{4s-2r} \in \{1, 2\}}\int_{L_{i_1}}...\int_{L_{i_{4r-2s}}} \int_{K_{j_1}}...\int_{K_{j_{4s}}}  V(\w, \z) \eta(\w, \z) \> d\w \> d\z, 
\end{align} 
where $\eta(\w, \z)$ is either $\eta_u(0, \w, \z)$ or $\eta_u'(0,\w, \z).$ We expect that the bound for the integrand is $ \ll (\log q)^{\tfrac{r^2}{s^2} - 1 + 12s-4r}.$

Due to symmetry in sets $\{w_1,...,w_{2s}\}, \{w_{2s+1},..., w_{4s}\}, \{z_{1},..., z_{2s-r}\},$ and $\{z_{2s-r +1},...,z_{4s-2r}\},$ we can assume that $L_{i_1}  = ...= L_{i_{2s - t_1}} = L_{i_{2s + 1}} = ... = L_{i_{4s - t_2}} =L_2$ and $K_{j_1}  = ...= K_{j_{2s - r - v_1}} = K_{j_{2s - r + 1}} = ... = L_{j_{4s - 2r - v_2}} = K_2$, and the rest of $L_{i_k}$ is $L_1$ and $K_{j_k}$ is $K_1.$ Note that $t_1+t_2$ is the number of $w_i$ which is integrated over $L_1$ and $v_1+v_2$ is the number of $z_j$ which is integrated over $L_2$.  Hence it is enough to show that
\begin{align} \label{integralofeqnRsmall} 
\int_{(L_{2})^{4s - t_1 - t_2}} \int_{(K_2)^{4s-2r-v_1-v_2}} \int_{(L_{1})^{t_1 + t_2}} \int_{(K_{1})^{v_1 + v_2}} V(\w, \z) \eta(\w,\z) \> d\w \> d\z \ll (\log q)^{\tfrac{r^2}{s^2} - 1 + 12s-4r}. 
\end{align} 

To compute this we need the following lemma.
\begin{lemma} \label{lem:boundforzeta}
Let $r$ be a real number, $z = x$ or $y$, and $w = \frac{1}{\log z} + it$ and $v = \frac{1}{\log z} + i\tau$. Then
$$\zeta^{r} (1 + w) \ll \left\{\begin{array}{cl} \log^{|r|} (|w| + 1) \ & \  {\rm if} \  |t| \geq 1; \\ & \\
 \frac{1}{|w|^r} & \  {\rm if} \ |t| < 1,
\end{array} \right.$$
and
$$\frac{\zeta'}{\zeta} (1 + w) \ll \left\{\begin{array}{cl} \log(|w| + 1) \ & \  {\rm if} \  |t| \geq 1; \\ & \\
 \log q & \  {\rm if} \ |t| < 1.
\end{array} \right.$$
For $r > 0,$
\begin{equation} \label{lemma3:boundforpositiveR}
\zeta^{r} (1 + w + v) \ll \left\{\begin{array}{cl}  
|w+v|^{-r} \ll (\log q)^r & \ {\rm if} \ |t|, |\tau| < 1;\\
\log^{r} (\max\{|w|, |v|\} + 1 ) + (\log q)^r & \ {\rm otherwise} , 
\end{array} \right.
\end{equation}
and for $r < 0,$
\begin{equation} \label{lemma3:boundfornegativeR}
\zeta^{-r} (1 + w + v) \ll \left\{\begin{array}{cl}  
|w + v|^r & \  {\rm if} \ |t|, |\tau| < 1; \\
\log^{r} (\max\{|w|, |v|\} + 1 ) & \ {\rm otherwise}.
\end{array} \right.
\end{equation}
\end{lemma}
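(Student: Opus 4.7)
The proof is a case analysis using two standard inputs about $\zeta$ on (and just to the right of) the line $\Re s=1$: (a) the Laurent expansion $\zeta(s)=(s-1)^{-1}+\gamma+O(|s-1|)$, which yields $|\zeta(s)|\asymp |s-1|^{-1}$ and $\zeta'/\zeta(s)=-(s-1)^{-1}+O(1)$ for $s$ in a fixed neighborhood of $1$; and (b) the convexity-type estimates $\zeta(\sigma+it)$, $1/\zeta(\sigma+it)$, $\zeta'/\zeta(\sigma+it)\ll \log(|t|+2)$ valid for $\sigma\geq 1$ and $|t|\geq 1$ (essentially Theorem 6.7 of \cite{MV}, already quoted in the paper). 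Throughout, $\Re w=\Re v=1/\log z$ with $\log z\leq \log q$, which lets me convert small-modulus factors into $\log q$ whenever needed.

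For the single-variable bounds I would split on $|t|$. If $|t|\geq 1$, then $|w|\asymp |t|+1$, so $1+w$ lies in the domain of (b); taking the appropriate upper bound on $\zeta$ or on $1/\zeta$ according to the sign of $r$ yields $\zeta^r(1+w)\ll \log^{|r|}(|w|+1)$, and $\zeta'/\zeta(1+w)\ll \log(|w|+1)$ is immediate. If $|t|<1$, then $1+w$ lies in the neighborhood where (a) applies, giving $|\zeta(1+w)|\asymp |w|^{-1}$ and hence $|\zeta^r(1+w)|\asymp |w|^{-r}$. For the log-derivative, $|\zeta'/\zeta(1+w)|\ll |w|^{-1}\leq \log z\leq \log q$, since $|w|\geq \Re w=1/\log z$.

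For the two-variable bounds, I would write $w+v=2/\log z+i(t+\tau)$. If both $|t|,|\tau|<1$, then $1+w+v$ stays in the neighborhood of $1$, so (a) gives $|\zeta(1+w+v)|\asymp |w+v|^{-1}$, and the inequality $|w+v|\geq \Re(w+v)=2/\log z$ turns $|w+v|^{-r}$ into at most $(\log q/2)^{r}$. Otherwise, at least one of $|t|,|\tau|$ is $\geq 1$; I would sub-split on $|t+\tau|$. When $|t+\tau|\geq 1$, (b) applies and $\zeta(1+w+v)\ll \log(|t+\tau|+2)\ll \log(\max\{|w|,|v|\}+1)$ via $|t+\tau|\leq 2\max\{|t|,|\tau|\}$. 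When $|t+\tau|<1$, we return to (a) and obtain $\zeta(1+w+v)\ll \log q$; this sub-case is the source of the additive $(\log q)^r$ term. Raising to the $r$-th power (or bounding $\zeta^{-r}$ for $r<0$, which only swaps the roles of the upper bounds on $|\zeta|$ and $|1/\zeta|$) gives the stated inequalities.

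The estimates are all routine and I expect no genuine obstacle. The only organizational subtlety is the two-variable "otherwise" branch: in the near-cancellation sub-case $|t+\tau|<1$ with $\max\{|t|,|\tau|\}\geq 1$, one simultaneously has a variable of large modulus and proximity to the pole, and one must verify that this contribution is absorbed by the $(\log q)^r$ term rather than by the $\log^r(\max+1)$ term.
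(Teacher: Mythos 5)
Your proposal is correct and is essentially the paper's own argument: the paper disposes of this lemma in one line, citing Theorem 6.7 of \cite{MV} together with the fact that $x$ and $y$ are small powers of $q$ (so $\log z \asymp \log q$), and your case analysis simply writes out the routine details of exactly that, including the right identification of the near-cancellation subcase $|t+\tau|<1$ as the source of the additive $(\log q)^r$ term in (\ref{lemma3:boundforpositiveR}). Your reading of the $r<0$ statement as the corresponding bound with the roles of $|\zeta|$ and $|1/\zeta|$ swapped is also the intended one, consistent with how the lemma is applied later in the paper.
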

The proof of Lemma \ref{lem:boundforzeta} follows from Theorem 6.7 in \cite{MV}, and the fact that $x$ and $y$ are small powers of $q.$
\\

The left hand side of (\ref{integralofeqnRsmall}) is bounded above by
\begin{align} \label{eqn:beforeL1}
&\ll \int_{(L_{2})^{4s - t_1 - t_2}} \int_{(K_2)^{4s-2r-v_1-v_2}} \int_{(L_{1})^{t_1 + t_2}} \int_{(K_{1})^{v_1 + v_2}}  \frac{\prod_{1 \leq j \leq 4s} |\zeta^{1/2s}(1 + w_j)|}{\prod_{1 \leq j \leq 4s - 2r} |\zeta^{1/s}(1 + z_{j})|}\\
& \hspace{1in}\cdot \frac{\prod_{1 \leq i, j \leq 2s } |\zeta^{1/4s^2} (1 + w_i + w_{2s + j})| \prod_{1 \leq i, j \leq 2s-r} |\zeta^{1/s^2} (1 + z_i + z_{2s - r + j})| }{\prod_{\substack{1 \leq i \leq 2s \\ 1 \leq j \leq 2s - r}} |\zeta^{1/2s^2}(1 + w_i + z_{2s - r + j})\zeta^{1/2s^2}(1 + w_{2s + i} + z_{j})| }  \nonumber \\
& \hspace{2in} \cdot \frac{d|w_1|...\>d|w_{4s}| \> d|z_{1}|...\>d|z_{4s-2r}|}{|w_1|^2...|w_{4s}|^2|z_1|^3...|z_{4s - 2r}|^3}. \nonumber
\end{align}  

By (\ref{lemma3:boundforpositiveR}) in Lemma \ref{lem:boundforzeta}, if at least one of $w_i$, $w_{2s+j}$ is integrated over $L_2,$ then
\begin{equation}
\label{eq:numeratorupperbound}
 |\zeta^{1/4s^2} (1 + w_i + w_{2s + j})| \ll \log^{1/4s^2} (\max\{|w_i|, |w_{2s+j}|\} + 1 ) + (\log q)^{1/4s^2}.
\end{equation}
Otherwise,
$ |\zeta^{1/4s^2} (1 + w_i + w_{2s + j})| \ll  (\log q)^{1/4s^2}.$

The integrand above contains the term $\frac{1}{|w_1|^2....|w_{4s}|^2|z_1|^3...|z_{4s-2r}|^3}$. Therefore, we may assume $|\zeta^{1/4s^2} (1 + w_i + w_{2s + j})| \ll (\log q)^{1/4s^2}$ from (\ref{eq:numeratorupperbound}) (otherwise at least one of $w_i$ or $w_{2s+j}$ is larger than $q$ in absolute value). Similarly, we can bound $|\zeta^{1/s^2} (1 + z_i + z_{2s - r + j})| \ll (\log q)^{1/s^2}$.

If $w_i$ is on $L_1,$ and $z_{j}$ is on $K_2,$ by (\ref{lemma3:boundfornegativeR}) of Lemma \ref{lem:boundforzeta},
$ |\zeta^{-1/2s^2} (1 + w_i + z_j)| \ll \log^{1/2s^2} (|w_i| + 1 ).$
But if $z_j$ is on $K_1,$ we can conclude that
$ |\zeta^{-1/2s^2} (1 + w_i + z_j)| \ll \log^{1/2s^2} (|w_i| + 1 ) + \log^{1/2s^2} (|z_j| + 1 ) .$ Since the integral contains the term $\frac{1}{|w_i|^2}$, the integration over $w_i$ is absolutely convergent. Similarly if $z_i$ is in $K_1$, then the integration over $z_i$ is absolutely convergent. Therefore (\ref{eqn:beforeL1}) is bounded by
\begin{align*} 
&\ll (\log q)^{1 + \tfrac{(2s-r)^2}{s^2}}\int_{(K_{2})^{4s -2r - v_1-v_2}}\int_{(L_{2})^{4s - t_1 - t_2}}  \frac{\prod_{1 \leq j \leq 2s-t_1} |\zeta^{1/2s}(1 + w_j)|\prod_{1 \leq j \leq 2s-t_2} |\zeta^{1/2s}(1 + w_{j + 2s})|}{\prod_{1 \leq j \leq 2s - r-v_1} |\zeta^{1/s}(1 + z_{j})|\prod_{1 \leq j \leq 2s - r-v_2} |\zeta^{1/s}(1 + z_{2s-r+j})|}\\
& \hspace{1in}\cdot \frac{1 }{\prod_{\substack{1 \leq i \leq 2s - t_1 \\ 1 \leq j \leq 2s - r-v_2}} |\zeta^{1/2s^2}(1 + w_i + z_{2s - r + j})| \prod_{\substack{1 \leq i \leq 2s - t_2 \\ 1 \leq j \leq 2s - r-v_1}} |\zeta^{1/2s^2}(1 + w_{2s + i} + z_{j})| }  \nonumber \\
& \hspace{1.5in} \cdot \frac{d|w_1|...\>d|w_{2s - t_1}|\>d|w_{2s + 1}|\>d|w_{4s-t_2}| \>  
  d|z_{1}|...\>d|z_{2s-r-v_1}|\>d|z_{2s-r+1}|...\>d|z_{4s-2r-v_2}|}{|w_1|^2...|w_{2s-t_1}|^2|w_{2s + 1}|^2...|w_{4s - t_2}|^2|z_1|^3...|z_{2s-r-v_1}|^3|z_{4s-2r-v_2}|^3...|z_{4s - 2r-w_2}|^3} \> \nonumber \\
\end{align*}  
By Lemma \ref{lem:boundforzeta}, the above is bounded by 
\begin{align*} 
&\ll (\log q)^{1 + \tfrac{(2s-r)^2}{s^2}}\int_{(K_{2})^{4s -2r - v_1-v_2}} \int_{(L_{2})^{4s - t_1 - t_2}} \frac{\prod_{1 \leq j \leq 2s-r - v_1} |z_j|^{1/s}\prod_{1 \leq j \leq 2s-r-v_2}|z_{2s-r + j}|^{1/s}}{\prod_{1 \leq j \leq 2s -t_1} | w_{j}|^{1/2s}\prod_{1 \leq j \leq 2s -t_2} |w_{2s +j}|^{1/2s}}\\
& \hspace{1in}\cdot \prod_{\substack{1 \leq i \leq 2s - t_1 \\ 1 \leq j \leq 2s - r-v_2}} | w_i + z_{2s - r + j}|^{1/2s^2} \prod_{\substack{1 \leq i \leq 2s - t_2 \\ 1 \leq j \leq 2s - r-v_1}} | w_{2s + i} + z_{j}|^{1/2s^2}   \nonumber \\
& \hspace{1.5in} \cdot \frac{d|w_1|...\>d|w_{2s - t_1}|\>d|w_{2s + 1}|\>d|w_{4s-t_2}| \>  
  d|z_{1}|...\>d|z_{2s-r-w_1}|\>d|z_{2s-r+1}|...\>d|z_{4s-2r-v_2}| }{|w_1|^2...|w_{2s-t_1}|^2|w_{2s + 1}|^2...|w_{4s - t_2}|^2|z_1|^3...|z_{2s-r-v_1}|^3|z_{4s-2r-v_2}|^3...|z_{4s - 2r-v_2}|^3} \> \nonumber 
\end{align*}  

By the change of variable, $w_j = \frac{1 + i a_j}{\log x}$ and $z_j = \frac{1 + i b_j}{\log x}$,  and $x = y^a$, the term above is bounded by
\begin{align} \label{eqn:afterL1} 
&\ll (\log q)^{p}\int_{(-\log x, \log x)^{8s -2r - t_1 - t_2 -v_1-v_2}} 
 \prod_{\substack{1 \leq i \leq 2s - t_1 \\ 1 \leq j \leq 2s - r-v_2}} |1 + a + ia_i + ib_{2s - r + j}|^{1/2s^2}    \\
& \cdot \prod_{\substack{1 \leq i \leq 2s - t_2 \\ 1 \leq j \leq 2s - r-v_1}} |1 + a + ia_{2s + i} + ib_{j}|^{1/2s^2}  \cdot \prod_{1 \leq j \leq 2s-t_1} \frac{da_{j}}{|1 + ia_j|^{2 + 1/2s}} \prod_{1 \leq j \leq 2s-t_2} \frac{da_{j}}{|1 + ia_{2s+ j}|^{2 + 1/2s}}  \nonumber\\ 
& \cdot \prod_{1 \leq j \leq 2s-r - v_1} \frac{db_{j}}{|a + ib_j|^{3 - 1/s}} \prod_{1 \leq j \leq 2s-r - v_2} \frac{db_{2s-r +j}}{|a + ib_{2s-r+j}|^{3 - 1/s}}\nonumber \\
& \ll (\log q)^{p}\int_{(-\log q, \log q)^{8s -2r - t_1 - t_2 -v_1-v_2}} 
 \prod_{\substack{1 \leq i \leq 2s - t_1 \\ 1 \leq j \leq 2s - r-v_2}} (|1 + ia_i|^{1/2s^2} + |a +ib_{2s - r + j}|^{1/2s^2}) \nonumber\\
& \hspace{0.5in} \cdot \prod_{\substack{1 \leq i \leq 2s - t_2 \\ 1 \leq j \leq 2s - r-v_1}} (|1+ ia_{2s + i}|^{1/2s^2} + |a + ib_{j}|^{1/2s^2})  \cdot \prod_{1 \leq j \leq 2s-t_1} \frac{da_{j}}{|1 + ia_j|^{2 + 1/2s}} \prod_{1 \leq j \leq 2s-t_2} \frac{da_{j}}{|1 + ia_{2s+ j}|^{2 + 1/2s}}   
   \nonumber \\
& \hspace{0.9in} \cdot \prod_{1 \leq j \leq 2s-r - v_1}\frac{db_{j}}{|a + ib_j|^{3 - 1/s}} \prod_{1 \leq j \leq 2s-r - v_2} \frac{db_{2s-r +j}}{|a + ib_{2s-r+j}|^{3 - 1/s}},\nonumber
\end{align}  
where 
\begin{align} \label{eqnforpowerP}
p  &=  1 + \frac{(2s-r)^2}{s^2} + 4s - t_1 - t_2 + 2 (4s - 2r - v_1 - v_2) + \frac{4s-t_1-t_2}{2s} - \frac{4s-2r-v_1-v_2}{s} \\
& \ \ \ - \frac{(2s -t_1)(2s-r-v_2)}{2s^2} - \frac{(2s -t_2)(2s-r-v_1)}{2s^2}\nonumber \\
&= \frac{r^2}{s^2} - 1 + 12s - 4r - (t_1 + t_2)\left(1 - \frac{1}{s} + \frac{r}{s^2}\right) - \frac{t_1v_2}{2s^2} - \frac{t_2v_1}{2s^2} - (v_1 + v_2)\left( 2 - \frac{2}{s} \right) \nonumber \\
& \leq \frac{r^2}{s^2} - 1 + 12s - 4r.\nonumber
\end{align}
The last inequality comes from the fact that $s \geq 2$ (Remark \ref{remS}) and $t_1, t_2, v_1, v_2 \geq 0.$ Moreover, the power of each $|1 + ia_j|$ and $|a + ib_j|$ from the first two products in the integrand of (\ref{eqn:afterL1}) is at most $1/s.$ Since $s \geq 2,$ $3 - 1/s - 1/s \geq 2$ and $2 + 1/2s - 1/s \geq 5/4.$ Hence the integration in (\ref{eqn:afterL1}) is absolutely convergent and does not depend on $q.$ From (\ref{eqn:afterL1}), (\ref{eqnforpowerP}), we obtain that  $$ R \ll (\log q)^{\frac{r^2}{s^2} + 12s - 4r},$$
where the implied constant depends on $r,s$.

\begin{rmk1}\label{rmk:truncationargument}
From the method used above, we see that if at least one of $t_1,$ $t_2,$ $v_1$ or $v_2 > 0,$ then the integral contributes $o((\log q)^{r^2/s^2 + 12s -4r})$, which constitutes an error term. Therefore the main term comes from the integral over $L_2$ and $K_2.$ This method to truncate the integral to an interval of length $2$ is easily generalized to other similar integrals.  In particular, the integral for $S_l$ can be treated the same way.  In \S \ref{sec:evaluateIl}, we will truncate the integral for $S_l$ without repeating this argument.
\end{rmk1}

From Remark \ref{rmk:truncationargument}, we can truncate the integral $R_1$ to be over $L_2$ and $K_2$. Then we apply Lemma \ref{lem:boundforzeta}, which is $\frac{\zeta '}{\zeta}(1 + w_i) \ll \log q,$ and $\frac{\zeta '}{\zeta}(1 + z_i) \ll \log q,$ when $w_i$ is on $L_2$ and $z_j$ is on $K_2.$ By the same arguments as bounding $R,$ we can show that
$$ R_1 \ll (\log q)^{\frac{r^2}{s^2} + 12s - 4r}.$$
Combining the bound on $R$ and $R_1,$ we derive (\ref{prop:upperbound}).

\section{The lower bound for $I_l$} \label{sec:evaluateIl}
In this section we will prove (\ref{prop:lowerbound}), which is 
$$ I_l \gg (\log x)^{2s + 1} (\log y)^{r^2/s^2 + 4s-4r - 1}.$$
Recall that $I_l$ is
\begin{align*} 
&  \frac{1}{(2\pi i)^{4s-2r + 1} }\int_{(c)^{4s - 2r + 1}} \frac{\prod_{1 \leq i \leq 2s} \zeta^{1/2s}(1 + w_0 + w_{i}) \prod_{1 \leq  j, k \leq s - r} \zeta^{1/s^2}(1 + z_j + z_{s - r + k})}{ \prod_{\substack{1 \leq i \leq 2s \\ 1 \leq j \leq s - r}} \zeta^{1/2s^2}(1 + w_{i} + z_{j}) \prod_{1 \leq k \leq s - r} \zeta^{1/s}(1 + w_0 + z_{s - r + k})}  \\
& \ \ \ \ \ \ \ \ \ \ \ \ \ \ \ \ \ \ \ \ \ \ \ \ \ \ \ \ \ \ \ \ \ \ \ \ \ \ \ \ \ \ \ \ \ \ \ \ \ \ \ \ \ \ \ \ \ \ \ \ \cdot X^{w_0}\Gamma(w_0) \frac{x^{w_1 + ...+ w_{2s}}y^{ z_1 + ... + z_{2(s - r)}}}{w_1^2...w_{2s}^2z_1^3...z_{2(s-r)}^3}  \eta_l(w_0, \vec{w}, \vec{z}) \> dw_0 \> d\vec{w} \> d\vec{z}. \nonumber
\end{align*}

Let $c = \frac{1}{8}$ in the integral above. Then we shift $w_0$ to $-\frac{1}{8} + \frac{1}{\log q}$ and obtain
$ I_l = M + E_1,$
where
\begin{align*} 
&M = \frac{1}{(\log x)^{2s}(\log y)^{4s-4r}} \frac{1}{(2\pi i)^{4s-2r}}\int_{(1/8)^{4s - 2r }} \frac{\prod_{1 \leq i \leq 2s} \zeta^{1/2s}(1 + w_{i}) \prod_{1 \leq  j, k \leq s - r} \zeta^{1/s^2}(1 + z_j + z_{s - r + k})}{ \prod_{\substack{1 \leq i \leq 2s \\ 1 \leq j \leq s - r}} \zeta^{1/2s^2}(1 + w_{i} + z_{j}) \prod_{1 \leq k \leq s - r} \zeta^{1/s}(1 + z_{s - r + k})}  \\
& \ \ \ \ \ \ \ \ \ \ \ \ \ \ \ \ \ \ \ \ \ \ \ \ \ \ \ \ \ \ \ \ \ \ \ \ \ \ \ \ \ \ \ \ \ \ \ \ \ \ \ \ \ \ \ \ \ \ \ \ \cdot  \frac{x^{w_1 + ... + w_{2s}}y^{ z_1 + ... + z_{2(s - r)}}}{w_1^2...w_{2s}^2z_1^3...z_{2(s-r)}^3}  \eta_l(0, \vec{w}, \vec{z})  \> d\vec{w} \> d\vec{z}, \nonumber
\end{align*}
and 
\begin{align*} 
&E_1 := \frac{1}{(\log x)^{2s}(\log y)^{4s-4r}} \frac{1}{(2\pi i)^{4s-2r + 1} }\int_{(1/8)^{4s - 2r }} \int_{(-1/8 + 1/\log q)}\\
& \hspace{2in} \frac{\prod_{1 \leq i \leq 2s} \zeta^{1/2s}(1 + w_0 + w_{i}) \prod_{1 \leq  j, k \leq s - r} \zeta^{1/s^2}(1 + z_j + z_{s - r + k})}{ \prod_{\substack{1 \leq i \leq 2s \\ 1 \leq j \leq s - r}} \zeta^{1/2s^2}(1 + w_{i} + z_{j}) \prod_{1 \leq k \leq s - r} \zeta^{1/s}(1 + w_0 + z_{s - r + k})}  \\
& \ \ \ \ \ \ \ \ \ \ \ \ \ \ \ \ \ \ \ \ \ \ \ \ \ \ \ \ \ \ \ \ \ \ \ \ \ \ \ \ \ \ \ \ \ \ \ \ \ \ \ \ \ \ \ \ \ \ \ \ \cdot X^{w_0}\Gamma(w_0) \frac{x^{w_1 + ... w_{2s}}y^{ z_1 + ... + z_{2(s - r)}}}{w_1^2...w_{2s}^2z_1^3...z_{2(s-r)}^3}  \eta_l(w_0, \vec{w}, \vec{z}) \> dw_0 \> d\vec{w} \> d\vec{z}. \nonumber
\end{align*}

By the same arguments as the proof of Lemma \ref{lem:errorE}, we can show that $E_1 \ll 1.$ Now, we will focus on bounding $M.$ We first shift the contour of integration in $w_i$ to $\Re w_i = 1/\log x$ for all $i$ and similarly shift the contour of integration in $z_j$ to $\Re z_j = 1/\log y$ for all $j.$  

We will now truncate the integrals.  We will want to take Taylor expansions so we will truncate further than in the last section.  Let $L_1$ be the path consisting of line segments from $\tfrac{1}{\log x} + i$ to $\tfrac{1}{\log x} + i\infty$ and from $\tfrac{1}{\log x} - i\infty$ to $\tfrac{1}{\log x} - i,$ $L_2$ the path consisting of line segments from $\tfrac{1}{\log x} + \frac{i}{\sqrt{\log x}}$ to $\tfrac{1}{\log x} + i$ and from $\tfrac{1}{\log x} - i$ to $\tfrac{1}{\log x} - \frac{i}{\sqrt{\log x}},$ and $L_3$ the line segment from $\tfrac{1}{\log x} - \frac{i}{\sqrt{\log x}}$ to $\tfrac{1}{\log x} + \frac{i}{\sqrt{\log x}}.$ Define $K_1, K_2$ and $K_3$ similarly but replacing every appearance of $\log x$ with $\log y.$ Hence $M$ can be written as

\begin{align*} 
&M = \frac{1}{(\log x)^{2s}(\log y)^{4s-4r}} \frac{1}{(2\pi i)^{4s-2r} }\sum_{i_1,...,i_{2s}, j_1,...,j_{2s-2r} \in \{1, 2, 3\}}\int_{L_{i_1}}...\int_{L_{i_{2s}}} \int_{K_{j_1}}...\int_{K_{j_{2s-2r}}} \\
& \ \ \ \ \ \ \ \frac{\prod_{1 \leq i \leq 2s} \zeta^{1/2s}(1 + w_{i}) \prod_{1 \leq  j, k \leq s - r} \zeta^{1/s^2}(1 + z_j + z_{s - r + k})}{ \prod_{\substack{1 \leq i \leq 2s \\ 1 \leq j \leq s - r}} \zeta^{1/2s^2}(1 + w_{i} + z_{j}) \prod_{1 \leq k \leq s - r} \zeta^{1/s}(1 + z_{s - r + k})}  \cdot  \frac{x^{w_1 + ... w_{2s}}y^{ z_1 + ... + z_{2(s - r)}}}{w_1^2...w_{2s}^2z_1^3...z_{2(s-r)}^3}  \eta_l(0, \vec{w}, \vec{z})  \> d\vec{w} \> d\vec{z}. \nonumber
\end{align*}

From Remark \ref{rmk:truncationargument}, if at least one of the integrals is over $L_1$ or $K_1,$ then the contribution of the integrals is small. Hence we can assume that $i_1,..,i_{2s}, j_1,...,j_{2s - 2r} \in \{2, 3\}.$

Since there are symmetries among variables $\{w_1,..., w_{2s}\},$ $\{z_1,...,z_{s-r}\},$ $\{z_{s-r + 1} ,...,z_{2s-2r}\},$ we can assume that 
\begin{enumerate}
\item $L_{i_1} = ...= L_{i_{2s - t}} = L_3,$
\item $L_{i_{2s - t + 1}} = ... = L_{i_{2s}} = L_2,$ 
\item $K_{j_1} = .... = K_{j_{s-r - v_1}} = K_{j_{s-r + 1}} = ... = K_{j_{2s-2r - v_2}} = K_3,$
\item $K_{j_{s-r-v_1 + 1}} = ... = K_{j_{s-r}} = K_{j_{2s-2r - v_2 + 1}} ... = K_{j_{2s-2r}} = K_2.$ 
\end{enumerate}

Note that $t$ is the number of $w_i$ which is integrated over $L_2$ and $v_1+v_2$ is the number of $z_j$ integrated over $K_2$.  

\begin{lemma}
With notation as above, if at least one of $t, v_1, v_2$ is not zero, then 
 \begin{align}  \label{integral:errorboundLower}
& \int_{K_{3}^{2s - 2r - v_1 - v_2}}\int_{K_{2}^{v_1 + v_2}} \int_{L_{3}^{2s - t}}\int_{L_{2}^{t}} \frac{\prod_{1 \leq i \leq 2s} \zeta^{1/2s}(1 + w_{i}) \prod_{1 \leq  j, k \leq s - r} \zeta^{1/s^2}(1 + z_j + z_{s - r + k})}{ \prod_{\substack{1 \leq i \leq 2s \\ 1 \leq j \leq s - r}} \zeta^{1/2s^2}(1 + w_{i} + z_{j}) \prod_{1 \leq k \leq s - r} \zeta^{1/s}(1 + z_{s - r + k})}\\
& \hspace{3in}   \cdot  \frac{x^{w_1 + ...+ w_{2s}}y^{ z_1 + ... + z_{2(s - r)}}}{w_1^2...w_{2s}^2z_1^3...z_{2(s-r)}^3}  \eta_l(0, \vec{w}, \vec{z})  \> d\vec{w} \> d\vec{z} \nonumber \\
& = o \left( (\log q)^{\frac{r^2}{s^2} + 6s - 4r}\right). \nonumber
\end{align}
\end{lemma}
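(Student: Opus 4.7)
The plan is to mirror the truncation argument of Section~\ref{sec:evaluateIu} (and the observation recorded in Remark~\ref{rmk:truncationargument}), adapted to the integrand of $I_l$. Specifically, I would perform the change of variables $w_i = (1+ia_i)/\log x$ and $z_j = (1+ib_j)/\log y$, so that the contour $L_3$ corresponds to $|a_i| \le \sqrt{\log x}$ and $L_2$ to $\sqrt{\log x} \le |a_i| \le \log x$, with the analogous picture for $K_3, K_2$ and the $b_j$. Under this substitution, $x^{w_i}$ and $y^{z_j}$ have bounded modulus, while the polynomial factors become $(\log x)^2/|1+ia_i|^2$ and $(\log y)^3/|1+ib_j|^3$, and the factor $\eta_l(0,\vec w,\vec z)$ is absolutely bounded by hypothesis.

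Next, I would invoke Lemma~\ref{lem:boundforzeta} to replace each zeta factor by a power of the appropriate variable: $|\zeta^{1/2s}(1+w_i)| \ll |w_i|^{-1/2s}$, $|\zeta^{1/s^2}(1+z_j+z_{s-r+k})| \ll |z_j+z_{s-r+k}|^{-1/s^2}$, $|\zeta^{-1/2s^2}(1+w_i+z_j)| \ll |w_i+z_j|^{1/2s^2}$, and $|\zeta^{-1/s}(1+z_{s-r+k})| \ll |z_{s-r+k}|^{1/s}$. After further splitting $|w_i+z_j|^{1/2s^2} \ll |w_i|^{1/2s^2} + |z_j|^{1/2s^2}$ (and likewise for the sums $z_j+z_{s-r+k}$), the integrand decouples into a sum of products of single-variable integrands whose tails in $a_i$ and $b_j$ decay faster than $|a_i|^{-1-\varepsilon}$ and $|b_j|^{-1-\varepsilon}$, since $s\ge 2$ by Remark~\ref{remS}.

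Carrying out each one-dimensional integration then produces an upper bound of the form $(\log q)^{p_l}$ where $p_l$ is an explicit linear function of $t, v_1, v_2$ with strictly negative slope, exactly as in the bookkeeping leading to \eqref{eqnforpowerP}. The essential source of savings is that on $L_3$ the combined effect of $|w_i|^{-2-1/2s}$ on a contour of length $\asymp 1/\sqrt{\log x}$ gives a contribution of order $(\log x)^{3/2+1/2s}$, while on $L_2$ the integral $\int_{1/\sqrt{\log x}}^{1} dt/t^{2+1/2s}$ is only $\asymp (\log x)^{1/2+1/4s}$, so each $L_2$ variable produces a factor smaller by $(\log x)^{-1-1/4s}$. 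The analogous computation for $K_2$ versus $K_3$ yields a net savings of $(\log y)^{-3/2+1/s}$ per variable (negative since $s\ge 2$, even after accounting for the fact that $|\zeta^{-1/s}(1+z_{s-r+k})|$ is larger on $K_2$ than on $K_3$). Taking the product shows $p_l < r^2/s^2 + 6s - 4r$ strictly whenever $t+v_1+v_2 > 0$, which yields the desired $o$-estimate.

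The main technical nuisance will be the cross terms $|w_i+z_j|^{1/2s^2}$: the split into $|w_i|^{1/2s^2} + |z_j|^{1/2s^2}$ generates a bounded number of sub-cases, each handled by the same absolutely convergent decoupled integrals. Once these contributions are combined, the proof reduces to checking that every coefficient multiplying $t, v_1, v_2$ in $p_l$ is strictly positive under the assumption $s\ge 2$, exactly as in the closing steps of the $I_u$ computation in Section~\ref{sec:evaluateIu}.
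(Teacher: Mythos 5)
Your strategy is the same as the paper's (bound the zeta factors via Lemma \ref{lem:boundforzeta}, split the cross terms $|w_i+z_j|^{1/2s^2}$ by subadditivity, reduce to one-dimensional integrals over $L_2,L_3,K_2,K_3$, and add up exponents of $\log q$), but two concrete steps are wrong as written, and the first one is fatal to the bookkeeping. The one-dimensional $L_3$ integral is not $(\log x)^{3/2+1/2s}$: writing $w=(1+ia)/\log x$ gives $\int_{L_3}|w|^{-2-1/2s}\,|dw|=(\log x)^{1+1/2s}\int_{|a|\le\sqrt{\log x}}|1+ia|^{-2-1/2s}\,da\asymp(\log x)^{1+1/2s}$; your ``sup times length'' count overshoots by $(\log x)^{1/2}$ because the integrand is concentrated where $|a|\ll 1$, and similarly the $K_3$ integrals are $\asymp(\log y)^{2-\beta_j}$ and $(\log y)^{2-1/s}$, not $(\log y)^{5/2-\cdots}$. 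This matters because the target exponent $r^2/s^2+6s-4r$ is \emph{exactly} what the all-$L_3$, all-$K_3$ configuration yields with the correct evaluations (using the constraint that the split exponents $\alpha_i,\beta_j$ sum to $(s-r)/s$), so there is no slack at all: with your inflated $L_3/K_3$ values the baseline already exceeds the target by a positive power of $\log q$ for each variable, and the per-variable savings you then extract on $L_2$ and $K_2$ do not bring you back under it when, say, only one variable sits on $L_2$. The argument closes only when the $L_3/K_3$ integrals are computed as $\asymp(\log x)^{1+1/2s-\alpha_i}$, $(\log y)^{2-\beta_j}$, $(\log y)^{2-1/s}$ and the $L_2/K_2$ ones as $(\log x)^{1/2+1/4s-\alpha_i/2}$, $(\log y)^{1-\beta_j/2}$, $(\log y)^{1-1/2s}$, which is precisely the computation leading to the paper's exponent $p$ and the strict inequality when $t+v_1+v_2>0$.

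Second, your parenthetical ``and likewise for the sums $z_j+z_{s-r+k}$'' is not a legitimate step: the factors $\zeta^{1/s^2}(1+z_j+z_{s-r+k})$ carry a \emph{negative} power of $|z_j+z_{s-r+k}|$, and the inequality $|z_j+z_{s-r+k}|^{-1/s^2}\ll|z_j|^{-1/s^2}+|z_{s-r+k}|^{-1/s^2}$ is false (take imaginary parts of size about $1$ that nearly cancel, so the left side is of size $(\log y)^{1/s^2}$ while the right side is $O(1)$). Subadditivity is only available for the positive-exponent cross terms $|w_i+z_j|^{1/2s^2}$. The correct, and sufficient, treatment of the numerator factors is the crude bound $|\zeta^{1/s^2}(1+z_j+z_{s-r+k})|\ll(\log q)^{1/s^2}$, valid because $\mathrm{Re}(z_j+z_{s-r+k})=2/\log y$; this is what produces the harmless prefactor $(\log q)^{(s-r)^2/s^2}$ in the paper's proof. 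With these two repairs your proposal coincides with the paper's argument.
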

This will imply that the main contribution of $M$ will come from the integral over $L_3$ for $w_i$ and $K_3$  for $z_j$ for all $1 \leq i \leq 2s$ and $1 \leq j \leq 2s - 2r.$ 

\begin{proof}
Since we assume that $z_j, z_{s-r+k}$ is on either $K_2$ or $K_3,$ from (\ref{lemma3:boundforpositiveR}) in Lemma \ref{lem:boundforzeta}, we obtain that $$|\zeta^{1/s^2}(1 + z_j + z_{s-r + k})| \ll \log^{1/s^2} q.$$ 
Also $ x^{w_1 + ... w_{2s}}y^{ z_1 + ... + z_{2(s - r)}} \ll 1$ on $L_i$ and $K_i$.

When $w_i$ is on $L_2$ or $L_3,$ and $z_j$ is on $K_2$ or $K_3,$  by Lemma \ref{lem:boundforzeta}, 
$$|\zeta^{1/2s}(1 + w_i)| \ll |w_i|^{-1/2s};$$
$$ |\zeta^{-1/s}(1 + z_{s - r + k}) | \ll |z_{s-r+k}|^{1/s};$$
and 
$$ |\zeta^{-1/2s^2}(1 + w_i + z_j)| \ll |w_i + z_j|^{1/2s^2}.$$

Therefore (\ref{integral:errorboundLower}) is bounded above by
\begin{align}  \label{integral:errorboundLower2}
&(\log q)^{\tfrac{(s-r)^2}{s^2}} \int_{K_{3}^{2s - 2r - v_1 - v_2 }}\int_{K_{2}^{v_1 +v_2}} \int_{L_{3}^{2s - t}}\int_{L_{2}^{t}}\prod_{\substack{1 \leq i \leq 2s  \\ 1 \leq j \leq s - r }} |w_{i} + z_{j} |^{1/2s^2}\\
& \hspace{2 in}\prod_{1 \leq i \leq 2s } \frac{d|w_i|}{|w_i|^{2+ 1/2s}} \prod_{1 \leq j \leq s-r}\frac{d|z_{j}|}{|z_j|^{3}} \prod_{1 \leq k \leq s-r}  \frac{d|z_{s-r + k}|}{|z_{s-r+k}|^{3 - 1/s}}. \nonumber 
\end{align}
Note that 
$ |w_{i} + z_{j} |^{1/2s^2} \leq |w_{i}|^{1/2s^2} + |z_{j} |^{1/2s^2}.$
Hence 
$\prod_{\substack{1 \leq i \leq 2s \\ 1 \leq j \leq s - r }} |w_{i} + z_{j} |^{1/2s^2}$ is bounded by $$ \sum_{\substack{0 \  \leq \ \alpha_1, ..., \alpha_{2s } \leq \frac{s-r}{2s^2} \\ 0 \ \leq \ \beta_1, ...., \beta_{s-r}  \leq \frac{2s}{2s^2}} }
|w_1|^{\alpha_{1}}...|w_{2s}|^{\alpha_{2s}}|z_1|^{\beta_1}...|z_{s-r}|^{\beta_{s-r}},$$
where 
$$\alpha_1 + ...+ \alpha_{2s-t} + \beta_1 + ... + \beta_{s-r} = \frac{s-r}{s}.$$
Each term of the sum in the integral (\ref{integral:errorboundLower2}) is bounded by
\begin{align}  \label{integral:errorboundLower3}
&(\log q)^{\tfrac{(s-r)^2}{s^2}} \int_{K_{3}^{2s - 2r - v_1 - v_2 }}\int_{K_{2}^{v_1 + v_2}} \int_{L_{3}^{2s - t}}\int_{L_{2}^{t}}\prod_{1 \leq i \leq 2s } \frac{d|w_i|}{|w_i|^{2+ 1/2s - \alpha_i}} \prod_{1 \leq j \leq s-r}\frac{d|z_{j}|}{|z_j|^{3 - \beta_j}} \prod_{1 \leq k \leq s-r}  \frac{d|z_{s-r + k}|}{|z_{s-r+k}|^{3 - 1/s}}. 
\end{align}

Since $\alpha_i \leq \frac{s-r}{2s^2},$  we have $2 + 1/2s - \alpha_i \geq 2 + \frac{r}{2s^2} > 2.$ Moreover, $\beta_i \leq \frac{1}{s}$, and $s \geq 2$. Thus $3 - \beta_i \geq  5/2$ and $3 - 1/s \geq 5/2.$

Integrating $w_i$ over $L_2$, we obtain that 
$$ \int_{L_2} \frac{d|w_i|}{|w_i|^{2 + 1/2s - \alpha_i}} \ll\int_{\frac{1}{\sqrt{\log x}}}^{1} \frac{1}{t^{2 + 1/2s - \alpha_i}} \>dt \ll (\log q)^{1/2 + 1/4s - \alpha_i/2}.$$

Integrating $w_i$ over $L_3,$ we obtain that
\begin{align*}
 \int_{L_3} \frac{d|w_i|}{|w_i|^{2 + 1/2s - \alpha_i}} &\ll (\log q)^{1 + 1/2s - \alpha_i} \int^{\sqrt{\log x}}_{-\sqrt{\log x}} \frac{1}{|1 + it|^{2 + 1/2s - \alpha_i}} \>dt \\
&\ll (\log q)^{1 + 1/2s - \alpha_i} \int^{\infty}_{-\infty} \frac{1}{1 + t^2} \>dt \\
&\ll (\log q)^{1 + 1/2s - \alpha_i}.
\end{align*}
Similary integrating $z_j$ and $z_{s-r + k}$ over $K_2$, we have
$$ \int_{K_2} \frac{d|z_j|}{|z_j|^{3 - \beta_j}} \ll (\log q)^{1 - \beta_j/2},
\ \ \ \ \ \ {\rm and } \ \ \ \ \ \ \int_{K_2} \frac{d|z_{s-r+k}|}{|z_{s-r+k}|^{3 - 1/s}} \ll (\log q)^{1 - 1/2s}.$$
Integrating $z_j$ and  $z_{s-r + k}$ over $K_3$, we have
$$ \int_{K_3} \frac{d|z_j|}{|z_j|^{3 - \beta_j}} \ll (\log q)^{2 - \beta_j},
\ \ \ \ \ \ {\rm and } \ \ \ \ \ \ \int_{K_3} \frac{d|z_{s-r+k}|}{|z_{s-r+k}|^{3 - 1/s}} \ll (\log q)^{2 - 1/s}. $$
Therefore (\ref{integral:errorboundLower3}) is bounded above by
$ (\log q)^{p},$
where 
\begin{align*}
 p = &\frac{(s-r)^2}{s^2}   + \sum_{i = 2s - t + 1}^{2s } \left(\frac{1}{2} + \frac{1}{4s} - \frac{\alpha_i}{2}\right) + \sum_{i = 1}^{2s - t} \left( 1 + \frac{1}{2s} - \alpha_i \right)+ \sum_{j = s-r-v_1 + 1}^{j = s-r }\left( 1 - \frac{\beta_j}{2}\right)  + \sum_{j = 1}^{s-r-v_1}(2 - \beta_j) \\
&+ v_2 \left( 1 - \frac{1}{2s}\right) + (s - r - v_2)\left(2 - \frac{1}{s}\right).
\end{align*}
Since $0 \leq \alpha_i \leq \frac{s - r}{2s^2} < 1,$ $0 \leq \beta_j \leq \frac{1}{s} < 1,$ and $s \geq 2,$ we have 
$$\frac{1}{2} + \frac{1}{4s} - \frac{\alpha_i}{2} <  1 + \frac{1}{2s} - \alpha_i \ \ \ \ \ \ ; \ \ \  1 - \frac{\beta_j}{2} < 2 - \beta_j \ \ \ \ \ {\rm and } \\ \ \ \ 1 - \frac{1}{2s} < 2 - \frac{1}{s}.$$

If at least one of $t, v_1, v_2$ is greater than 0, then 
\begin{align*} 
 p \ \ &< \ \ \frac{(s-r)^2}{s^2}  + \sum_{i = 1}^{2s } \left( 1 + \frac{1}{2s} - \alpha_i \right) +  \sum_{j = 1}^{s-r}(2 - \beta_j) + (s - r )\left(2 - \frac{1}{s}\right)  \\
&=  6s - 4r + \frac{r^2}{s^2} . \nonumber
\end{align*}
We see that if at least one of $t, v_1, v_2$ is greater than 0, then (\ref{integral:errorboundLower3}) is $o((\log q)^{6s - 4r + \frac{r^2}{s^2}})$, where the implied constant depends only on $r$ and $s,$ and we arrive at (\ref{integral:errorboundLower}). 
\end{proof}

We will now bound the main contribution, which is
\begin{align} \label{integral:maincontributionL3K3}
&\frac{1}{(2\pi i)^{4s-2r} }\int_{K_{3}^{2s-2r}}\int_{L_{3}^{2s}} \frac{\prod_{1 \leq i \leq 2s} \zeta^{1/2s}(1 + w_{i}) \prod_{1 \leq  j, k \leq s - r} \zeta^{1/s^2}(1 + z_j + z_{s - r + k})}{ \prod_{\substack{1 \leq i \leq 2s \\ 1 \leq j \leq s - r}} \zeta^{1/2s^2}(1 + w_{i} + z_{j}) \prod_{1 \leq k \leq s - r} \zeta^{1/s}(1 + z_{s - r + k})} \\
& \hspace{3 in} \cdot  \frac{x^{w_1 + ...+ w_{2s}}y^{ z_1 + ... + z_{2(s - r)}}}{w_1^2...w_{2s}^2z_1^3...z_{2(s-r)}^3}  \eta_l(0, \vec{w}, \vec{z})  \> d\vec{w} \> d\vec{z}. \nonumber
\end{align} 
To obtain a lower bound for (\ref{integral:maincontributionL3K3}), we need the following two lemmas.
\begin{lemma} \label{lem:uppforL3K3} Let $s, r$ be positive integers defined as in \S 2 and $t$ be an integer such that $0 \leq t \leq 2s$.  Further, let $\alpha_i$ be a rational number greater than 2 such that $\alpha_i - \frac{t}{2s^2} - \frac{1}{s} \geq 2.$ Then 
\begin{align*} 
&\int_{K_{3}^{2s-2r}} \int_{L_{3}^{t}}\frac{\prod_{1 \leq i \leq t} \zeta^{1/2s}(1 + w_{i}) \prod_{1 \leq  j, k \leq s - r} \zeta^{1/s^2}(1 + z_j + z_{s - r + k})}{ \prod_{\substack{1 \leq i \leq t \\ 1 \leq j \leq s - r}} \zeta^{1/2s^2}(1 + w_{i} + z_{j}) \prod_{1 \leq k \leq s - r} \zeta^{1/s}(1 + z_{s - r + k})} \\
& \hspace{3 in} \cdot  \frac{x^{w_1 + ...+ w_{t}}y^{ z_1 + ... + z_{2(s - r)}}}{w_1^2...w_{t}^2z_1^{\alpha_1}...z_{2(s-r)}^{\alpha_{2(s-r)}}}  \eta(\vec{w}, \vec{z})  \> d\vec{w} \> d\vec{z}, \nonumber \\
&\ll(\log x)^{\tfrac{t}{2s} + t}(\log y)^{\tfrac{(s-r)^2}{s^2} -\tfrac{t(s-r)}{2s^2} - \tfrac{s-r}{s} + \alpha_1 + ... + \alpha_{2s-2r} - (2s-2r)}
\end{align*} 
where $\eta(\vec{w}, \vec{z})$ is an absolutely convergent Euler product for $w_i$, $z_i$ in the domain ${\rm Re} (w_i), {\rm Re} (z_i) \geq -3/16,$ and the implied constant depends on $s, r.$ 
\end{lemma}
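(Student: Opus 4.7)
The plan is to bound the integrand in absolute value using Lemma \ref{lem:boundforzeta}, rescale the variables to extract the powers of $\log x$ and $\log y$, and then check that what remains is absolutely convergent uniformly in $q$.

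First, on $L_3$ and $K_3$ we have $|w_i|, |z_j| \ll 1/\sqrt{\log q}$, so the small-argument bounds of Lemma \ref{lem:boundforzeta} apply:
\begin{align*}
|\zeta^{1/2s}(1+w_i)| &\ll |w_i|^{-1/2s}, & |\zeta^{-1/s}(1+z_{s-r+k})| &\ll |z_{s-r+k}|^{1/s}, \\
|\zeta^{-1/2s^2}(1+w_i+z_j)| &\ll |w_i+z_j|^{1/2s^2}, & |\zeta^{1/s^2}(1+z_j+z_{s-r+k})| &\ll (\log y)^{1/s^2},
\end{align*}
where the last bound uses $|z_j+z_{s-r+k}| \geq \Re(z_j+z_{s-r+k}) = 2/\log y$. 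The Euler product $\eta$ and the factors $x^{w_i}, y^{z_j}$ are uniformly bounded on these contours.

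Next, substitute $w_i = s_i/\log x$ with $s_i = 1+it_i$, $|t_i| \leq \sqrt{\log x}$, and $z_j = r_j/\log y$ with $r_j = 1+iu_j$, $|u_j| \leq \sqrt{\log y}$. Since $\log x = a\log y$, one has $w_i + z_j = (s_i/a + r_j)/\log y$. Tallying the logarithmic factors produced by every bound above, by $1/w_i^2$ and $1/z_j^{\alpha_j}$, and by the Jacobian $(\log x)^{-t}(\log y)^{-(2s-2r)}$, a direct count yields precisely
\[
(\log x)^{t+t/(2s)}(\log y)^{(s-r)^2/s^2 - t(s-r)/(2s^2) - (s-r)/s + \alpha_1+\cdots+\alpha_{2(s-r)} - (2s-2r)},
\]
which matches the claimed exponents.

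It then suffices to show that the leftover dimensionless integral
\[
\int_{|t_i| \leq \sqrt{\log x}}\int_{|u_j| \leq \sqrt{\log y}}\frac{\prod_{k=1}^{s-r}|r_{s-r+k}|^{1/s}\prod_{i,j}|r_j + s_i/a|^{1/2s^2}}{\prod_{i=1}^t|s_i|^{2+1/2s}\prod_{j=1}^{2(s-r)}|r_j|^{\alpha_j}} \, d\vec{t}\,d\vec{u}
\]
is $O_{r,s}(1)$. Since $1/(2s^2) < 1$, we have $|r_j + s_i/a|^{1/2s^2} \ll |r_j|^{1/2s^2} + |s_i|^{1/2s^2}$; expanding the product over $(i,j)$ yields a finite sum of monomials of the shape $\prod_i |s_i|^{e_i}\prod_j|r_j|^{f_j}$ with $e_i \leq (s-r)/(2s^2)$ and $f_j \leq t/(2s^2)$. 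For each such monomial, the net exponent of $|s_i|$ is at most $-2-1/(2s)+(s-r)/(2s^2) = -2 - r/(2s^2) < -1$, which is integrable, and the net exponent of each $|r_j|$ is at most $-\alpha_j + 1/s + t/(2s^2) \leq -2$ by the hypothesis $\alpha_j - t/(2s^2) - 1/s \geq 2$. Thus each monomial integrates to a constant depending only on $r, s, a$, and, since there are at most $2^{t(s-r)}$ such monomials, the total is $O_{r,s}(1)$.

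The main obstacle is the cross term $\prod_{i,j}|w_i+z_j|^{1/2s^2}$: its bookkeeping entangles the $w_i$ with the $z_j$, and it is precisely the hypothesis $\alpha_i - t/(2s^2) - 1/s \geq 2$ — essentially tight for this expansion — that guarantees convergence in each $|r_j|$-direction after the bound $|r_j+s_i/a|^{1/2s^2} \leq |r_j|^{1/2s^2}+|s_i|^{1/2s^2}$ is applied. All remaining steps are routine exponent counting.
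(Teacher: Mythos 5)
Your proof is correct and follows essentially the same route as the paper: bound each zeta factor via Lemma \ref{lem:boundforzeta} on $L_3$ and $K_3$, rescale $w_i$ by $\log x$ and $z_j$ by $\log y$ to extract exactly the claimed powers of logarithms, and use the subadditive bound $|w_i+z_j|^{1/2s^2}\ll |w_i|^{1/2s^2}+|z_j|^{1/2s^2}$ together with the hypothesis $\alpha_j-\tfrac{t}{2s^2}-\tfrac1s\ge 2$ to see the remaining rescaled integral is $O_{r,s}(1)$. Your explicit monomial expansion of the cross terms just spells out the convergence check that the paper leaves implicit.
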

\begin{proof} Since $w_i$ is on $L_3$ and $z_i$ is on $K_3,$ by Lemma \ref{lem:boundforzeta}, the integral in the statement is bounded by 
\begin{align*} 
&\int_{K_{3}^{2s-2r}} \int_{L_{3}^{t}}\frac{\prod_{\substack{1 \leq i \leq t \\ 1 \leq j \leq s - r}} | w_{i} + z_{j}|^{1/2s^2} }{\prod_{1 \leq  j, k \leq s - r} | z_j + z_{s - r + k}|^{1/s^2}} \cdot  \prod_{1 \leq i \leq t} \frac{d|w_i|}{|w_i|^{2 + 1/2s}}\prod_{1 \leq j \leq s - r}\frac{d|z_j|}{|z_j|^{\alpha_j}}  \prod_{1 \leq k \leq s-r} \frac{d|z_{s-r + k}|}{|z_{s - r + k}|^{\alpha_{s-r+k} - 1/s}}.\nonumber \\
\end{align*}

Changing variables from $w_i$ to $\frac{1 + it_i}{\log x}$ and $z_j$ to $\frac{1 +i\tau_j}{\log y},$ where $-\sqrt{\log x} \leq t_i \leq \sqrt{\log x}$ and $-\sqrt{\log y} \leq \tau_i \leq \sqrt{\log y},$ we obtain that the above expression is bounded above by 
\begin{equation} \label{eqn:lemma4logJ}
 (\log x)^{t + t/2s} (\log y)^{\tfrac{(s-r)^2}{s^2} + \alpha_1 + ... + \alpha_{2s - 2r} - (2s - 2r) -\frac{s-r}{s}} \cdot J,
\end{equation}
where
\begin{align*} 
& J = \int_{(-\sqrt{\log y}, \sqrt{\log y})^{2s-2r}} \int_{(-\sqrt{\log x}, \sqrt{\log x})^{t}}\frac{\prod_{\substack{1 \leq i \leq t \\ 1 \leq j \leq s - r}} \left| \frac{1 + it_i}{\log x} + \frac{1 + i\tau_j}{\log y}\right|^{1/2s^2} }{\prod_{1 \leq  j, k \leq s - r} | 2 + i\tau_{j} + i\tau_{s - r + k}|^{1/s^2}} \\
& \hspace{1 in} \cdot  \prod_{1 \leq i \leq t} \frac{dt_i}{|1 + it_i|^{2 + 1/2s}}\prod_{1 \leq j \leq s - r}\frac{d\tau_j}{|1 + i\tau_j|^{\alpha_j}}  \prod_{1 \leq k \leq s-r} \frac{d\tau_{s-r + k}}{|1 + i\tau_{s - r + k}|^{\alpha_{s-r+k} - 1/s}}.\nonumber \\
& \leq \int_{(-\infty, \infty)^{2s-2r}} \int_{(-\infty, \infty)^{t}}\prod_{\substack{1 \leq i \leq t \\ 1 \leq j \leq s - r}} \left(\left| \frac{1 + it_i}{\log x} \right|^{1/2s^2} + \left|\frac{1 + i\tau_j}{\log y}\right|^{1/2s^2} \right) \\
& \hspace{1 in} \cdot  \prod_{1 \leq i \leq t} \frac{dt_i}{|1 + it_i|^{2 + 1/2s}}\prod_{1 \leq j \leq s - r}\frac{d\tau_j}{|1 + i\tau_j|^{\alpha_j}}  \prod_{1 \leq k \leq s-r} \frac{d\tau_{s-r + k}}{|1 + i\tau_{s - r + k}|^{\alpha_{s-r+k} - 1/s}}.\nonumber \\
& \leq (\log y)^{-\tfrac{t(s-r)}{2s^2}}\int_{(-\infty, \infty)^{2s-2r}} \int_{(-\infty, \infty)^{t}}\prod_{\substack{1 \leq i \leq t \\ 1 \leq j \leq s - r}} \left(\left| 1 + it_i\right|^{1/2s^2} + \left|1 + i\tau_j\right|^{1/2s^2} \right) \\
& \hspace{1 in} \cdot  \prod_{1 \leq i \leq t} \frac{dt_i}{|1 + it_i|^{2 + 1/2s}}\prod_{1 \leq j \leq s - r}\frac{d\tau_j}{|1 + i\tau_j|^{\alpha_j}}  \prod_{1 \leq k \leq s-r} \frac{d\tau_{s-r + k}}{|1 + i\tau_{s - r + k}|^{\alpha_{s-r+k} - 1/s}},\nonumber
\end{align*}
where the last inequality comes from the fact that $\log x > \log y.$ Note that $\alpha_j - \frac{t}{2s^2} - 1/s \geq 2$ and moreover $2 + 1/2s - \frac{s - r}{2s^2} \geq 2 + \frac{r}{2s^2}.$ Therefore the integrations over $t_i$ and $\tau_j$ are absolutely convergent, and $J \ll (\log y)^{-\frac{t(s-r)}{2s^2}}.$ Inserting this bound into (\ref{eqn:lemma4logJ}) completes the proof. 
\end{proof}

\begin{lemma} \label{lem:lowerboundz}
Fix any real $\alpha > 2$ and $\beta > 0$ and any natural number $m$.  Also, let $y > 2$ and write
$$I = \left(\frac{1}{2\pi i}\right)^{2m}  \int_{(\frac{1}{\log y})^{2m}} \prod_{1\leq i, j \leq m} \zeta^\beta (1+z_i+z_{m+j}) \frac{y^{\sum_{1\leq i\leq 2m}z_i}}{\prod_{1\leq i\leq 2m}z_i^\alpha} d\vec z,
$$where $\int_{(\frac{1}{\log y})^{2m}}$ denotes an iterated integral on the vertical lines $\Re z_i = \frac{1}{\log y}$ for $1\leq i\leq 2m$.  Let $\gamma = 2m\alpha + m^2 \beta - 2m$.  Then
$$I \gg_m (\log y)^{\gamma}.
$$
\end{lemma}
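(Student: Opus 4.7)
The plan is to exploit positivity. Since $\beta>0$, the Dirichlet series $\zeta^\beta(s)=\sum_n d_\beta(n)\, n^{-s}$ has nonnegative coefficients, so I would expand
$$\prod_{1\leq i,j\leq m}\zeta^\beta(1+z_i+z_{m+j}) = \sum_{\{n_{ij}\}_{1\leq i,j\leq m}} \prod_{i,j}\frac{d_\beta(n_{ij})}{n_{ij}^{1+z_i+z_{m+j}}}$$
and interchange sum and integral. This is justified by absolute convergence: on the contour $\Re z_i = 1/\log y$, each $z_i$ contributes $|z_i|^{-\alpha}$ decay with $\alpha>2$, and $\Re(1+z_i+z_{m+j})=1+2/\log y>1$ keeps the Dirichlet series absolutely convergent. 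The resulting expression for $I$ will then be a sum of manifestly nonnegative quantities, making a lower bound simply a matter of retaining a well-chosen subset of terms.

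Next, I would observe that the dependence of the integrand on the $z$-variables factorizes. Setting $A_i := \prod_j n_{ij}$ for $1\leq i \leq m$ and $B_j:=\prod_i n_{ij}$ for $1\leq j\leq m$, the exponential factors collapse to $\prod_{i=1}^m (y/A_i)^{z_i}\prod_{j=1}^m (y/B_j)^{z_{m+j}}$, and the $2m$-fold contour integral splits as a product of $2m$ one-dimensional integrals of the form
$$\frac{1}{2\pi i}\int_{(1/\log y)} \frac{(y/C)^{z}}{z^\alpha}\,dz = \frac{(\log(y/C))_+^{\alpha-1}}{\Gamma(\alpha)},$$
valid for any real $\alpha>0$ and $C>0$ via the standard Hankel-contour evaluation with the principal branch of $z^\alpha$. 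Thus
$$I = \frac{1}{\Gamma(\alpha)^{2m}} \sum_{\{n_{ij}\}}\prod_{i,j}\frac{d_\beta(n_{ij})}{n_{ij}}\prod_{i=1}^m (\log(y/A_i))_+^{\alpha-1}\prod_{j=1}^m (\log(y/B_j))_+^{\alpha-1},$$
every summand being nonnegative.

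To extract the lower bound, I would restrict the sum to tuples satisfying $n_{ij}\leq y^{1/(2m)}$ for all $i,j$, which forces $A_i,B_j\leq y^{1/2}$ and hence $\log(y/A_i),\log(y/B_j)\geq \tfrac{1}{2}\log y$. The restricted sum factors completely into a product of $m^2$ identical one-dimensional sums:
$$\sum_{\substack{\{n_{ij}\}\\ n_{ij}\leq y^{1/(2m)}}}\prod_{i,j}\frac{d_\beta(n_{ij})}{n_{ij}} = \Bigg(\sum_{n\leq y^{1/(2m)}}\frac{d_\beta(n)}{n}\Bigg)^{m^2}.$$
The classical Selberg--Delange asymptotic gives $\sum_{n\leq Y}d_\beta(n)/n \asymp (\log Y)^\beta$ for $\beta>0$, so this contributes $\gg_m (\log y)^{m^2\beta}$. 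Combined with the $(\tfrac{1}{2}\log y)^{\alpha-1}$ from each of the $2m$ outer factors, this yields
$$I \gg_m (\log y)^{2m(\alpha-1)+m^2\beta} = (\log y)^\gamma,$$
as required.

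The main technical point, more of a bookkeeping check than a genuine obstacle, is verifying the Mellin identity and the interchange of sum and integral for non-integer $\alpha$; but since the line $\Re z = 1/\log y$ stays well away from the branch cut, everything reduces to the standard principal-branch computation already implicit elsewhere in the paper. It is worth noting that this positivity-plus-restriction argument works precisely because the numerator exponent $\beta$ in $\zeta^\beta(1+z_i+z_{m+j})$ is positive — exactly the feature the authors secured by their choice of $\M_f$ shorter than $\Pf$ in the outline, which arranged for the ``bad'' negative-power $\zeta$ factors in $I_l$ to be decoupled from the variables one needs to treat delicately.
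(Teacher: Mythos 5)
Your proposal is correct and follows essentially the same route as the paper: expand $\zeta^\beta$ into its nonnegative Dirichlet series, evaluate the now-factored contour integrals by the Hankel/Mellin formula, and use positivity to restrict to $n_{ij}\leq y^{1/(2m)}$ so that each logarithmic factor is $\asymp\log y$ and the remaining sums give $(\log y)^{m^2\beta}$. The only cosmetic difference is that you use the exact identity with the factor $\bigl(\log (y/C)\bigr)_+^{\alpha-1}/\Gamma(\alpha)$, whereas the paper records the same evaluation with an $O\bigl((\log y)^{\alpha-2}\bigr)$ error term; both yield the stated bound.
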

\begin{proof}
Since we are in the region of absolute convergence, write
$$
\zeta^\beta (1+z_i+z_{m+j})= \sum_{n_{i, j}\geq 1} \frac{d_\beta(n_{i, j})}{n_{i, j}} \frac{1}{n^{z_i}n^{z_{m+j}}},
$$so that

\begin{eqnarray*}
I &=& \prod_{1\leq i, j \leq m}\sum_{n_{i, j}=1}^y \frac{d_\beta(n_{i, j})}{n_{i, j}}  \left(\frac{1}{2\pi i}\right)^{2m}  \int_{(\frac{1}{\log y})^{2m}} \frac{\prod_{1\leq i\leq m} \left(\frac{y}{\prod_{1\leq j\leq m}n_{i, j}}\right)^{z_i}\prod_{1\leq j\leq m} \left(\frac{y}{\prod_{1\leq i\leq m}n_{i, j}}\right)^{z_j}} {\prod_{1\leq i\leq 2m}z_i^\alpha} d\vec z\\
&=& \prod_{1\leq i, j \leq m} {\sum_{n_{i, j}=1}^y}^{\flat} \frac{d_\beta(n_{i, j})}{n_{i, j}}  \prod_{1\leq i\leq m} \left( \frac{1}{\Gamma(\alpha)} \log^{\alpha - 1} \left(\frac{y}{\prod_{1\leq j\leq m} n_{i, j} } \right) + O\left((\log y)^{\alpha - 2}\right) \right)\\ 
&& \cdot \prod_{1\leq j\leq m} \left(\frac{1}{\Gamma(\alpha)}\log^{\alpha - 1} \left(\frac{y}{\prod_{1\leq i\leq m}n_{i, j}}\right)+O\left((\log y)^{\alpha - 2} \right)\right),
\end{eqnarray*}
where ${\sum_{n_{i, j}=1}^y}^{\flat}$ denotes sums over $n_{i, j}$ such that $\prod_{1\leq i\leq m}n_{i, j} < y$ for all $j$ and $\prod_{1\leq j\leq m}n_{i, j}<y$ for all $i$.  Since the coefficient $d_\beta(n)$ is positive, we may truncate each sum at $n_{i, j} \leq y^{\frac{1}{2m}}$ and still obtain a lower bound.  With this truncation, we have $\log^{\alpha - 1} \frac{y}{\prod_{1\leq j\leq m} n_{i, j} } + O\left((\log y)^{\alpha - 2}\right) \asymp (\log y)^{\alpha - 1}$.  Then
\begin{eqnarray*}
I &\gg& (\log y)^{2m(\alpha-1)} \prod_{1\leq i, j \leq m} \sum_{n_{i, j}=1}^{y^{\frac{1}{2m}}} \frac{d_\beta(n_{i, j}) }{n_{i, j}}\\
&\asymp& (\log y)^{2m(\alpha-1)} \left(\log^{\beta} y^{\frac{1}{2m}}\right)^{m^2}\\
&\asymp& (\log y)^{\gamma},
\end{eqnarray*}as desired.

\end{proof}

Now we will start bounding $(\ref{integral:maincontributionL3K3}).$ On $L_3, K_3$, we can write $\eta_l(0, \vec{w}, \vec{z})$ as
$$\eta_l(0,0,0)\left(1 + O(|w_1| + ... + |w_4| + |z_1| + |z_2|)\right) =\eta_l(0,0,0)\left(1 + O\left(\frac{1}{\sqrt{\log q}}\right)\right).$$
From Lemma \ref{lem:uppforL3K3}, the integral (\ref{integral:maincontributionL3K3}) is $O((\log q)^{r^2/s^2 + 6s-2r}).$
Therefore the integral of the big-$O$ term in the Taylor expansion is $o\left((\log q)^{r^2/s^2 + 6s-2r}\right),$ and we will focus on bounding 
\begin{align}  \label{eqn:M1}
M_1 &=\frac{1}{(2\pi i)^{4s-2r} }\int_{K_{3}^{2s-2r}} \int_{L_{3}^{2s}}\frac{\prod_{1 \leq i \leq 2s} \zeta^{1/2s}(1 + w_{i}) \prod_{1 \leq  j, k \leq s - r} \zeta^{1/s^2}(1 + z_j + z_{s - r + k})}{ \prod_{\substack{1 \leq i \leq 2s \\ 1 \leq j \leq s - r}} \zeta^{1/2s^2}(1 + w_{i} + z_{j}) \prod_{1 \leq k \leq s - r} \zeta^{1/s}(1 + z_{s - r + k})} \\
& \hspace{3 in} \cdot  \frac{x^{w_1 + ...+ w_{2s}}y^{ z_1 + ... + z_{2(s - r)}}}{w_1^2...w_{2s}^2z_1^3...z_{2(s-r)}^3}   \> d\vec{w} \> d\vec{z}. \nonumber
\end{align} 

The calculuation in deriving the lower bound for $M_1$ is delicate and the notation becomes rather involved. For clarity, we will first illustrate the method by working out the case $k = 1/2$ (i.e. $r = 1$ and $s = 2$), and then indicate the changes necessary for the general case for $k$ any rational number between 0 and 1.

\subsection{k = 1/2}\label{sec:1/2case}
In this section, we may write bounds such as $G = O(f(x, y)/\sqrt{a})$ or $G \ll f(x, y)/\sqrt{a}$ for certain functions $f$. Here, it is important that the implied constant does not depend on $a$.    For the case of $k = 1/2$, $M_1$ is 
\begin{align} \label{M1for12}
& \frac{1}{(2\pi i)^{6} }\int_{K_{3}^{2}} \int_{L_{3}^{4}}\frac{\prod_{1 \leq i \leq 4} \zeta^{1/4}(1 + w_{i}) \zeta^{1/4}(1 + z_1 + z_{2})}{ \prod_{1 \leq i \leq 4} \zeta^{1/8}(1 + w_{i} + z_{1}) \zeta^{1/2}(1 + z_{2})}  \cdot  \frac{x^{w_1 + ... +w_{4}}y^{ z_1 +z_{2}}}{w_1^2...w_{4}^2z_1^3z_{2}^3}   \> d\vec{w} \> d\vec{z}
\end{align}  

Recall that $x = y^a.$ We will first shift $w_4$ and replace $L_3$ by a path consisting of $\mathcal{C}_1, \mathcal{C}_2,$ and $\mathcal{C}_3.$ $\mathcal{C}_1$ composes of two horizontal line segments with vertices from $-\frac{\sqrt{a}}{\log x} + \frac{i}{\sqrt{\log x}}$ to $\frac{1}{\log x} + \frac{i}{\sqrt{\log x}}$ and from $\frac{1}{\log x} -  \frac{i}{\sqrt{\log x}}$ to $-\frac{\sqrt{a}}{\log x} - \frac{i}{\sqrt{\log x}}.$ $\mathcal{C}_2$ consists of two vertical line segments with vertices from $-\frac{\sqrt{a}}{\log x} + \frac{i}{\log x}$ to $-\frac{\sqrt{a}}{\log x} + \frac{i}{\sqrt{\log x}}$ and  from $-\frac{\sqrt{a}}{\log x} - \frac{i}{\sqrt{\log x}}$ to $-\frac{\sqrt{a}}{\log x} - \frac{i}{\log x}.$ Finally $\mathcal{C}_3$ is a polygonal beginning with the line segment from $-\frac{\sqrt{a}}{\log x} - \frac{i}{\log x}$ to $-\frac{i}{\log x}$, continuing with the semicircle $\{  \frac{e^{i\theta}}{\log x} \ : \ -\frac{\pi}{2} \leq \theta \leq \frac{\pi}{2} \}$ , and ending with the line segment from $\frac{i}{\log x}$ to $-\frac{\sqrt{a}}{\log x} + \frac{i}{\log x}.$

Next we show that the contribution of the integration over $\mathcal{C}_1$ and $\mathcal{C}_2$ is small. When $w_4$ is on $\mathcal{C}_1$, by Lemma Theorem 6.7 in \cite{MV}, we obtain that for $z_1$ is on $K_3,$

\begin{align} \label{integral:overC1}
 \int_{\mathcal{C}_1} \frac{\zeta^{1/4}(1 + w_4)}{\zeta^{1/8}(1 + w_4 + z_1)} \frac{x^{w_4}}{w_4^2} \> dw_4 &\ll \int_{-\tfrac{\sqrt{a}}{\log x}}^{\tfrac{1}{\log x}} \frac{\left|\sigma + \frac{i}{\sqrt{\log x}} + \frac{1}{\log y}\right|^{1/8}}{\left|\sigma + \frac{i}{\sqrt{\log x}}\right|^{2 + 1/4}} x^{\sigma} \> d\sigma \\
&\ll  \int_{-\tfrac{\sqrt{a}}{\log x}}^{\tfrac{1}{\log x}}  \frac{1}{(\log x)^{1/16}}(\sqrt{\log x})^{2 + 1/4} x^{\sigma} \> d\sigma \nonumber \\
& \ll (\log x)^{1/16} \ll (\log q)^{1/16}\nonumber
\end{align}
 By Lemma \ref{lem:uppforL3K3}, we obtain that 
\begin{align} \label{integral:therestofC1}
& \frac{1}{(2\pi i)^{5} }\int_{K_{3}^{2}}\int_{L_{3}^{3}} \frac{\prod_{1 \leq i \leq 3} \zeta^{1/4}(1 + w_{i}) \zeta^{1/4}(1 + z_1 + z_{2})}{ \prod_{1 \leq i \leq 3} \zeta^{1/8}(1 + w_{i} + z_{1}) \zeta^{1/2}(1 + z_{2})}  \cdot  \frac{x^{w_1 + ... +w_{3}}y^{ z_1 +z_{2}}}{w_1^2...w_{3}^2z_1^3z_{2}^3}   \> dw_1...\>dw_3 \> d\vec{z} \\
&\ll (\log x)^{3 + 3/4}(\log y)^{4 - 5/8} \ll (\log q)^{7 + 1/8}. \nonumber
\end{align} 
Therefore the integral over $\mathcal{C}_1$ contributes $\ll (\log q)^{7 + 1/8 + 1/16} = o((\log q)^{8 + 1/4}).$

When $w_4$ is on $\mathcal{C}_2,$ by Theorem 6.7 in \cite{MV}, we obtain that for $z_1$ is on $K_3$
\begin{align} \label{integral:overC2}
  \int_{\mathcal{C}_2} \frac{\zeta^{1/4}(1 + w_1)}{\zeta^{1/8}(1 + w_1 + z_1)} \frac{x^{w_1}}{w_1^2} \> dw_1 &\ll \int_{\tfrac{1}{\log x}}^{\tfrac{1}{\sqrt{\log x}}} \frac{\left|-\frac{\sqrt{a}}{\log x} + i\tau + z_1\right|^{1/8}}{\left|-\frac{\sqrt{a}}{\log x} + i\tau \right|^{2 + 1/4}} e^{-\sqrt{a}}d\tau \\
&\ll e^{-\sqrt{a}} \int_{\tfrac{1}{\log x}}^{\tfrac{1}{\sqrt{\log x}}} \frac{1}{\tau^{2 + 1/8}}  +  \frac{|z_1|^{1/8}}{\tau^{2 + 1/4}} \> d\tau \nonumber \\
& \ll \frac{(\log x)^{9/8} +  |z_1|^{1/8}(\log x)^{5/4}}{e^{\sqrt{a}}}.\nonumber
\end{align}

Inserting the above bound to the full integral and applying Lemma \ref{lem:uppforL3K3}, we obtain that

\begin{align}  \label{integral:overC2conclusion}
&\int_{K_{3}^{2}}  \int_{L_{3}^{3}}\int_{\mathcal{C}_2} \left|\frac{\prod_{1 \leq i \leq 4} \zeta^{1/4}(1 + w_{i}) \zeta^{1/4}(1 + z_1 + z_{2})}{ \prod_{1 \leq i \leq 4} \zeta^{1/8}(1 + w_{i} + z_{1}) \zeta^{1/2}(1 + z_{2})}  \cdot  \frac{x^{w_1 + ... +w_{4}}y^{ z_1 +z_{2}}}{w_1^2...w_{4}^2z_1^3z_{2}^3}  \right| \> d|w_1|...\>d|w_4| \> d|z_1| \> d|z_2| \\
&\ll  e^{-\sqrt a} \int_{K_{3}^{2}} \int_{L_{3}^{3}}\left((\log x)^{9/8} +  |z_1|^{1/8}(\log x)^{5/4}\right) \nonumber\\
& \hspace{1.3 in} \cdot\left|\frac{\prod_{1 \leq i \leq 3} \zeta^{1/4}(1 + w_{i}) \zeta^{1/4}(1 + z_1 + z_{2})}{ \prod_{1 \leq i \leq 3} \zeta^{1/8}(1 + w_{i} + z_{1}) \zeta^{1/2}(1 + z_{2})}  \cdot  \frac{x^{w_1 + ... +w_{3}}y^{ z_1 +z_{2}}}{w_1^2...w_{4}^2z_1^3z_{2}^3}  \right| \> d|w_1|...\>d|w_4| \> d|z_1| \> d|z_2|\nonumber \\
&\ll \frac{(\log x)^{5}(\log y)^{3 + 1/4}}{e^{\sqrt a}}. \nonumber
\end{align}  

We now consider integration of $w_4$ over $\mathcal{C}_3.$ By Theorem 6.7 in \cite{MV}, we have $$\zeta^{1/4}(1 + w_4) = w_4^{-1/4}\left( 1 + O(|w_4|)\right),$$ and $$\zeta^{-1/8}(1 + w_4 + z_1) = (w_4 + z_1)^{1/8}\left(1 + O(|w_4| + |z_1|)\right). $$
Since $|w_4| \ll \frac{1}{\log x}$ and $|z_1| \ll \frac{1}{\sqrt{\log y}},$ by Lemma \ref{lem:uppforL3K3} the integration of the big-$O$ term contributes $o((\log q)^{8 + 1/4}).$ The integration over the main term is
\begin{equation} \label{eqn:maintermC3}
\frac{1}{2\pi i}\int_{\mathcal{C}_3} \frac{(w_4 + z_1)^{1/8}}{w_4^{2 + 1/4}} x^{w_4} \> dw_4 = \frac{z^{1/8}}{2\pi i}\int_{\mathcal{C}_3} \left(1 + \frac{w_4}{z_1}\right)^{1/8}\frac{x^{w_4}}{w_4^{2 + 1/4}} \> dw_4.
\end{equation}
Since $z_1$ is on $K_3,$ and $w_4$ is on $\mathcal{C}_3$, $|z_1| \geq \frac{1}{\log y} = \frac{a}{\log x},$ and $|w_4| \leq \frac{\sqrt{a + 1}}{\log x}.$ Hence $$\left|\frac{w_4}{z_1}\right| \leq \frac{\sqrt{a +1}}{a} \leq \sqrt{\frac{2}{a}}.$$
Since $a$ is chosen to be much larger than 2, we apply Taylor expansion to write 
$$ \left( 1 + \frac{w_4}{z_1}\right)^{1/8} = 1 + \sum_{n = 1}^{\infty}  a_n \left(\frac{w_4}{z_1}\right)^n,$$
where 
\begin{align*}
|a_n| = \left|\frac{\left(\tfrac{1}{8}\right)....\left(\tfrac{1}{8} - n - 1\right)}{n!}\right| = \frac{\left(\tfrac{1}{8}\right)}{1}\frac{\left(1 - \tfrac{1}{8}\right)}{2} ... \frac{\left|n - 1 - \tfrac{1}{8}\right|}{n} \leq \frac{1}{8}.
\end{align*}
Therefore
$$ \left( 1 + \frac{w_4}{z_1}\right)^{1/8} = 1 + O\left(\frac{1}{8\sqrt{a}}\right).$$
We will show that the contribution from the term $O\left(\frac{1}{8\sqrt{a}}\right)$ is small. Over $\mathcal{C}_3,$ we have
\begin{align} \label{integral:overC3w1}
\int_{\mathcal{C}_3} \left|\frac{x^{w_4}}{w_4^{2 + 1/4}}\right| \> d|w_4| &\ll
\int_{-\frac{\sqrt{a}}{\log x}}^{0} \frac{x^{\sigma}}{\left|\sigma + \frac{i}{\log x}\right|^{2 + 1/4}} \> d\sigma +  (\log x)^{1 + 1/4}  \\
&\ll (\log x)^{2 + 1/4} \int_{-\infty}^{0} x^{\sigma} \> d\sigma + (\log x)^{1 + 1/4} \ll (\log x)^{1 + 1/4}. \nonumber
\end{align}
By Lemma \ref{lem:uppforL3K3}, we obtain that
\begin{align}  \label{integral:overC3error}
& \int_{K_{3}^{2}}  \int_{L_{3}^{3}} |z_1|^{1/8}\left|\frac{\prod_{1 \leq i \leq 3} \zeta^{1/4}(1 + w_{i}) \zeta^{1/4}(1 + z_1 + z_{2})}{ \prod_{1 \leq i \leq 3} \zeta^{1/8}(1 + w_{i} + z_{1}) \zeta^{1/2}(1 + z_{2})}  \cdot  \frac{x^{w_1 + ... +w_{3}}y^{ z_1 +z_{2}}}{w_1^2...w_{3}^2z_1^3z_{2}^3}  \right| \> dw_3 \> dw_2 \> dw_1 \> d\vec{z} \\
&\ll (\log x)^{3 + 3/4}(\log y)^{3 + 1/4}. \nonumber
\end{align}  
From (\ref{eqn:maintermC3}), (\ref{integral:overC3w1}) and (\ref{integral:overC3error}), the contribution from the term $O\left(\frac{1}{8\sqrt{a}}\right)$ is $O\left( \frac{(\log x)^5(\log y)^{3 + 1/4}}{\sqrt{a}}\right).$  Hence we can write (\ref{M1for12}) as 
\begin{align} \label{M1atC3}
& \frac{1}{(2\pi i)^6}\int_{K_{3}^{2}} \int_{L_{3}^{3}}z_1^{1/8}\frac{\prod_{1 \leq i \leq 3} \zeta^{1/4}(1 + w_{i}) \zeta^{1/4}(1 + z_1 + z_{2})}{ \prod_{1 \leq i \leq 3} \zeta^{1/8}(1 + w_{i} + z_{1}) \zeta^{1/2}(1 + z_{2})}  \cdot  \frac{x^{ w_1 + ... +w_{3}}y^{ z_1 +z_{2}}}{w_1^2...w_{3}^2z_1^3z_{2}^3}  \int_{\mathcal{C}_3} \frac{x^{w_4}}{w_4^{2 + 1/4}} \> d\vec{w} \> d\vec{z} \\
&\hspace{2in}+ O\left( (\log x)^{5}(\log y)^{3 + 1/4}\left( \frac{1}{e^{\sqrt a}} + \frac{1}{\sqrt{a}}\right)\right). \nonumber
\end{align}
By the change of variables from $w_4$ to $w_4/\log x$, we see that the integration over $\mathcal{C}_3$ becomes
$$ \frac{(\log x)^{1 + 1/4}}{2\pi i} \int_{\mathcal{H}_3} w_4^{-2 - 1/4}e^{w_4} \> dw_4,$$
where $\mathcal{H}_3$ starts at $-\sqrt{a} - i$, loops around 0 and ends at $-\sqrt{a} + i.$ Let $\mathcal{H}$ be a contour consisting of line segments from $- \infty - i$ to $-\sqrt{a} - i$ and from $-\sqrt{a} + i$ to $-\infty + i.$ From Hankel's formula (e.g. C.3 in \cite{MV})
$$ \frac{1}{2\pi i} \int_{\mathcal{H}_3 \cup \mathcal{H}} w_4^{-2 - 1/4}e^{w_4} \> dw_4 =  \frac{1}{\Gamma(2 + 1/4)}.$$
The integral over $\mathcal{H} \ll \int_{-\infty}^{-\sqrt{a}} e^{x} \> dx \ll e^{-\sqrt{a}}.$ From (\ref{integral:overC3error}) and Hankel's formula, (\ref{M1atC3}) is 
\begin{align*} 
& \frac{1}{(2\pi i)^5}\frac{(\log x)^{1 + 1/4}}{\Gamma\left(2 + \frac{1}{4} \right)}\int_{K_{3}^{2}}\int_{L_{3}^{3}} \frac{\prod_{1 \leq i \leq 3} \zeta^{1/4}(1 + w_{i}) \zeta^{1/4}(1 + z_1 + z_{2})}{ \prod_{1 \leq i \leq 3} \zeta^{1/8}(1 + w_{i} + z_{1}) \zeta^{1/2}(1 + z_{2})}  \cdot  \frac{x^{ w_1 + ... +w_{3}}y^{ z_1 +z_{2}}}{w_1^2...w_{3}^2z_1^{3 - 1/8}z_{2}^3}  \> dw_3 \> dw_2 \> dw_1 \> d\vec{z} \\
&\hspace{2in}+ O\left( (\log x)^{5}(\log y)^{3 + 1/4}\left( \frac{1}{e^{\sqrt a}} + \frac{1}{\sqrt{a}}\right)\right). \nonumber
\end{align*}
We repeat the process of integrating $w_4$ to $w_1, w_2,$ and $w_3$ and obtain that $(\ref{M1for12})$ is 
\begin{align*} 
& \frac{1}{(2\pi i)^2}\frac{(\log x)^{5}}{\Gamma^4\left(2 + \frac{1}{4} \right)}\int_{K_{3}^{2}} \frac{ \zeta^{1/4}(1 + z_1 + z_{2})}{ \zeta^{1/2}(1 + z_{2})}  \cdot  \frac{y^{ z_1 +z_{2}}}{z_1^{3 - 1/2}z_{2}^3} \> d\vec{z} + O\left( (\log x)^{5}(\log y)^{3 + 1/4}\left( \frac{1}{e^{\sqrt a}} + \frac{1}{\sqrt{a}}\right)\right). \nonumber
\end{align*}
On $K_3,$ we have $\zeta^{-1/2}(z_2) = z_2^{1/2}\left( 1 + O(|z_2|)\right).$ The integration over the error term is $o((\log q)^{8 + 1/4}).$ Therefore the integral above is
 \begin{align*} 
& \frac{1}{(2\pi i)^2}\frac{(\log x)^{5}}{\Gamma^4\left(2 + \frac{1}{4} \right)}\int_{K_{3}^{2}}  \zeta^{1/4}(1 + z_1 + z_{2})  \frac{y^{ z_1 +z_{2}}}{z_1^{3 - 1/2}z_{2}^{3-1/2}} \> d\vec{z} + O\left( (\log x)^{5}(\log y)^{3 + 1/4}\left( \frac{1}{e^{\sqrt a}} + \frac{1}{\sqrt{a}}\right)\right). \nonumber
\end{align*}
By the same truncation arguments as in the beginning of the section, we can show that
\begin{align*}
\int_{K_{3}^{2}}  \zeta^{1/4}(1 + z_1 + z_{2})  \frac{y^{ z_1 +z_{2}}}{z_1^{3 - 1/2}z_{2}^{3-1/2}} \> d\vec{z} = \int_{(1/\log y)^{2}}  \zeta^{1/4}(1 + z_1 + z_{2})  \frac{y^{ z_1 +z_{2}}}{z_1^{3 - 1/2}z_{2}^{3-1/2}} \> d\vec{z} + o\left((\log q)^{3 + 1/4} \right). 
\end{align*}
Therefore (\ref{M1for12}) is 
\begin{align*} 
& \frac{1}{(2\pi i)^2}\frac{(\log x)^{5}}{\Gamma^4\left(2 + \frac{1}{4} \right)}\int_{(1/\log y)^{2}}  \zeta^{1/4}(1 + z_1 + z_{2})  \frac{y^{ z_1 +z_{2}}}{z_1^{3 - 1/2}z_{2}^{3-1/2}} \> d\vec{z} + O\left( (\log x)^{5}(\log y)^{3 + 1/4}\left( \frac{1}{e^{\sqrt a}} + \frac{1}{\sqrt{a}}\right)\right). 
\end{align*}

The power of $\zeta$ left in the integral is positive, and we apply Lemma \ref{lem:lowerboundz} to bound the main term above from below by $(\log x)^{5}(\log y)^{3 + 1/4}.$ Hence the above expression is 
$$ \geq \left(k_1 - \frac{k_2}{\sqrt{a}}\right)(\log x)^{5}(\log y)^{3 + 1/4},$$
where $k_1, k_2$ are absolute constants not depending on $a.$ Therefore for $a$ sufficiently large, and $k=1/2$, $I_l$ is bounded below by $(\log x)^{5}(\log y)^{3 + 1/4}$ as desired. 

\subsection{Modifications for the general case}
The proof of the general case is very similar with a few superficial differences.  For notational convenience, let 
$$
\mathcal Q = (\log x)^{1+2s}(\log y)^{2(2s-2r)+\frac{(s-r)^2}{s^2} - \frac{2(s-r)}{s}}
$$
As in \S \ref{sec:1/2case}, we first shift contours for $w_i$ for $1\leq i\leq 2s$.  Let us first shift $w_{2s}$.  When we shift, we replace $K_3$ by a path consisting of $\mathcal{C}_1, \mathcal{C}_2,$ and $\mathcal{C}_3$.  We remind the reader that the contribution of $\mathcal{C}_1$ is small since $w_{2s}$ is bounded away from $0$ so we save a power of $\log x$ from the $\frac{1}{w_{2s}^2}$ term.  The contribution from $\mathcal C_2$ is small since $|x^{w_{2s}}| = e^{-\sqrt{a}}$ on $\mathcal C_2$, so that the total contribution is $O \bfrac{Q} {e^{\sqrt{a}}}.$
The main term comes from $\mathcal C_3$ as before.  To be specific, we are now interested in the integral
\begin{eqnarray}
\label{eqn:maintermC3general}
&&\frac{1}{2\pi i}\int_{\mathcal{C}_3} \frac{\prod_{1\leq j\leq s-r}(w_{2s} + z_j)^{1/2s^2}}{w_{2s}^{2 + 1/2s}} x^{w_{2s}} \> dw_{2s} \\
&=& \prod_{1\leq j\leq s-r}z_j^{1/2s^2}\frac{1}{2\pi i}\int_{\mathcal{C}_3} \prod_{1\leq j\leq s-r}\left(1 + \frac{w_{2s}}{z_j}\right)^{1/2s^2}\frac{x^{w_{2s}}}{w_{2s}^{2 + 1/2s}} \> dw_{2s} \nonumber.
\end{eqnarray}  We have constructed this contour so that we can Taylor expand $\left(1 + \frac{w_{2s}}{z_j}\right)^{1/2s^2}$ to see that  
$$ \left( 1 + \frac{w_{2s}}{z_j}\right)^{1/2s^2} = 1 + O\left(\frac{1}{2s^2\sqrt{a}}\right).$$
An appeal to Lemma \ref{lem:uppforL3K3}, and the same calculation as before gives that the $O\left(\frac{1}{2s^2\sqrt{a}}\right)$ terms contribute $O\bfrac{(s-r)Q}{2s^2\sqrt{a}}$.  The rest of the integral is
$$\prod_{1\leq j\leq s-r}z_j^{1/2s^2}\frac{1}{2\pi i}\int_{\mathcal{C}_3} \frac{x^{w_{2s}}}{w_{2s}^{2 + 1/2s}} \> dw_{2s}
 = \frac{(\log x)^{1+\frac{1}{2s}}}{\Gamma(2+\frac{1}{2s})} \prod_{1\leq j\leq s-r}z_j^{1/2s^2} \left(1+O\bfrac{1}{e^{\sqrt{a}}}\right).
$$
Repeating this process for all the $w_i$ reduces the integral to a matter of studying 
$$\int_{K_3^{2s-2r}} \prod_{1\leq j, k\leq s-r} \zeta^{1/s^2}(1+z_j+z_{s-r+k}) \prod_{1\leq j\leq s-r} \frac{y^{z_j}}{z_j^{3-1/s}}\prod_{1\leq k\leq s-r} \frac{y^{z_{s-r+k}}}{z_{s-r+k}^{3-1/s}}d\vec z.
$$This can be bounded using Lemma \ref{lem:lowerboundz}, and the result follows.
\hskip 2 in

\paragraph{\bf Acknowledgements:} We would like to thank Professor Soundararajan for suggesting this problem, for his guidance, and for his constant encouragement.

\end{document}